\newtheorem{theorem}{Theorem}[section]
\newtheorem{conjecture}[theorem]{Conjecture}
\newtheorem{definition}[theorem]{Definition}
\newtheorem{example}[theorem]{Example}
\newtheorem{proposition}[theorem]{Proposition}
\newenvironment{proof}[1][Proof]{\textbf{#1.} }{\ \rule{0.5em}{0.5em}}
\def\N{\mathbb{N}}
\def\Z{\mathbb{Z}}
\def\O{\mathcal{O}}
\newcommand{\hs}{\hspace{0.06cm}}
\DeclareMathOperator{\jp}{jp}
\DeclareMathOperator{\ft}{ft}
\DeclareMathOperator{\sjp}{sjp}
\DeclareMathOperator{\syr}{syr}
\DeclareMathOperator{\sft}{sft}
\DeclareMathOperator{\odd}{\mathbb{O}}
\date{}
\begin{document}
\title{Is the Syracuse falling time bounded by 12?}

\author{Shalom Eliahou\footnote{eliahou@univ-littoral.fr}, Jean Fromentin\footnote{fromentin@univ-littoral.fr}\hspace{0.01cm} and R\'enald Simonetto\footnote{renalds@microsoft.com}}

\maketitle

\begin{abstract}
Let $T \colon \mathbb{N}\to \mathbb{N}$ denote the $3x+1$ function, where $T(n)=n/2$ if $n$ is even, $T(n)=(3n+1)/2$ if $n$ is odd. As an accelerated version of $T$, we define a \emph{jump} at $n \ge 1$ by $\textrm{jp}(n) = T^{(\ell)}(n)$, where $\ell$ is the number of digits of $n$ in base 2. We present computational and heuristic evidence leading to surprising conjectures. The boldest one, inspired by the study of $2^{\ell}-1$ for $\ell \le 500\hs000$, states that for any $n \ge 2^{500}$, at most four jumps starting from $n$ are needed to fall below $n$, a strong form of the Collatz conjecture.
\end{abstract}

\begin{quote}
\textbf{Keywords.} Collatz conjecture, $3x+1$ problem, stopping time, glide record, jump function.
\end{quote}

\section{Introduction}

We denote by $\N$ the set of positive integers. Let $T \colon \N \to \N$ be the notorious $3x+1$ function, defined by $T(n)=n/2$ if $n$ is even, $T(n)=(3n+1)/2$ if $n$ is odd. For $k \ge 0$, denote by $T^{(k)}$ the $k$th iterate of $T$. The \emph{orbit} of $n$ under $T$ is the sequence
$$
\O_T(n)=(n,T(n),T^{(2)}(n),\dots).
$$

The famous Collatz conjecture states that for all $n \ge 1$, there exists $r \ge 1$ such that $T^{(r)}(n)=1$. The least such $r$ is denoted $\sigma_\infty(n)$ and called the \emph{total stopping time} of $n$. An equivalent version of the Collatz conjecture states that for all $n \ge 2$, there exists  $s \ge 1$ such that $T^{(s)}(n)<n$. The least such $s$ is denoted by $\sigma(n)$ and called the \emph{stopping time} of $n$. For instance, we have
\begin{equation}\label{sigma le 2}
\sigma(n)=
\begin{cases}
1 &\textrm{ if $n$ is even,}\\
2 &\textrm{ if $n \equiv 1 \bmod 4$,}
\end{cases}
\end{equation}
as is well known and easy to check. A \emph{stopping time record} is an integer $n \ge 2$ such that $\sigma(m) < \sigma(n)$ for all $2 \le m \le n-1$.

For the original slower version $C \colon \N \to \N$, where $C(n)=n/2$ or $3n+1$ according as $n$ is even or odd, the analog of the stopping time is called the \emph{glide} in~\cite{R}. The list of all currently known \emph{glide records}, complete up to at least $2^{60}$, is maintained in~\cite{R2}. It is quite likely that glide records and stopping time records coincide; we have verified it by computer up to $2^{32}$.

\smallskip
It is well known that $\sigma(n)$ is unbounded as $n$ grows. For instance, since
\begin{equation}\label{2^l-1}
T^{(\ell)}(2^\ell-1)=3^\ell-1,
\end{equation}
as follows from the formula $T(2^a3^b-1)=2^{a-1}3^{b+1}-1$ for $a \ge 1$, we have $\sigma(2^\ell-1) \ge \ell$ for all $\ell \ge 2$.

In this paper, we propose an accelerated version of the function $T$. The idea, somewhat as in~\cite{T}, is to apply an iterate of $T$ to $n$ depending on the number of digits of $n$ in base 2. Accordingly, we introduce the following function.

\begin{definition} The \emph{jump function} $\jp \colon \N \to \N$ is defined for $n \in \N$ by
$$
\jp(n) = T^{(\ell)}(n),
$$
where $\ell=\lfloor \log_2(n) +1\rfloor$ is the number of digits of $n$ in base~2.
\end{definition}

\begin{example} \emph{We have $\jp(1) = T^{(1)}(1)=2$, and $\jp(2)=T^{(2)}(2)=2$ since $2$ is of length $\ell=2$ in base $2$. For $n=27$, written $11011$ in base 2, hence of length $\ell=5$, we have $\jp(27)=T^{(5)}(27)=71$. In turn, $71$ is of length $\ell=7$ in base $2$ since $2^6 \le 71 < 2^7$, whence $\jp(71)=T^{(7)}(71)=137$. The orbit of $27$ under jumps is displayed below in~\eqref{jumps from 27}.}
\end{example}

\begin{example} \emph{A single jump at $n=2^\ell-1$ with $\ell \ge 1$ yields
\begin{equation}
\jp(2^\ell-1)=3^\ell-1.
\end{equation}
This follows from the equalities $\ell=\lfloor \log_2(2^\ell-1) +1\rfloor$ and~\eqref{2^l-1}.}
\end{example}

\begin{example}\label{jp(2n)} \emph{We have $\jp(2n)=\jp(n)$ for all $n \ge 1$. Indeed, $2n$ is of length one more than $n$ in base $2$.}
\end{example}

In analogy with the stopping time relative to $T$, we now introduce the falling time relative to jumps. As $\jp(1)=\jp(2)=2$, we only consider $n \ge 3$.

\begin{definition} Let $n \ge 3$. The \emph{falling time} of $n$, denoted $\ft(n)$, is the least $k \ge 1$ such that $\jp^{(k)}(n) < n$, or $\infty$ if there is no such $k$.
\end{definition}

Note that, for a presumed cyclic orbit under $T$ with minimum $m \ge 3$, we would have $\jp(m)=\infty$.

There is no tight comparison between stopping time and falling time. It may happen that $\sigma(a) < \sigma(b)$ whereas $\ft(a) > \ft(b)$. For instance, for $a=41$ and $b=43$, we have
\begin{align*}
\sigma(41)=2 & \,<\, \sigma(43)=5, \\
\ft(41)=8 & \,>\, \ft(43)=2.
\end{align*}
It may also happen that $\ft(n) > \sigma(n)$, as shown by the case $n=41$.

\medskip
Of course, the Collatz conjecture is equivalent to $\ft(n) < \infty$ for all $n \ge 3$.
In Section~\ref{sec ft}, we provide computational evidence leading us to a stronger conjecture, namely that $\ft(n)$ is in fact bounded for all $n \ge 3$. Specifically, all integers $n$ we have tested so far satisfy $\ft(n) \le 16$. See Conjecture~\ref{conjecture ft}. In Section~\ref{sec syracuse}, in analogy with the falling time, we introduce the \emph{Syracuse falling time} $\sft(n)$, and corresponding conjectures, by only considering the odd terms in the orbits $\O_T(n)$. In Section~\ref{sec 2^l-1}, we report surprising computational results on $\ft(2^\ell-1)$ and $\sft(2^\ell-1)$ for $\ell \le 500\hs000$, and we formulate corresponding conjectures. In the last Section~\ref{sec large}, inspired by the case $n=2^{\ell}-1$, we formulate still stronger conjectures on $\ft(n)$ and $\sft(n)$ for very large integers $n$.  We conclude the paper with some supporting heuristics.

For a wealth of information, developments and commented references related to the $3x+1$ problem, see the webpage and book of J. C. Lagarias~\cite{L1,L2}.
To date, the Collatz conjecture has been verified by computer up to $2^{68}$ by D. Barina~\cite{B}. Using this bound, it follows from~\cite{E} that any non-trivial cycle of $T$ must have length at least 114\hs208\hs327\hs604.

\section{Falling time records}\label{sec ft}

In this section, we only consider those positive integers $n$ satisfying $\sigma(n) \ge 3$, i.e. such that $n \equiv 3 \bmod 4$ by~\eqref{sigma le 2}. Let us denote by $4\N+3$ the set of those integers. Here is our first computational evidence that the falling time remains small.

\begin{proposition}\label{prop ft 14} We have $\ft(n) \le 14$ for all $n \in [1, 2^{44}-1]$ such that $n \equiv 3 \bmod 4$.
\end{proposition}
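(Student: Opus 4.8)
This is fundamentally a finite computation: verify directly that $\ft(n) \le 14$ for every $n \in [1, 2^{44}-1]$ with $n \equiv 3 \bmod 4$. But a naive enumeration over all $\sim 2^{42}$ such residues, each time iterating $\jp$ until we drop below $n$, is far too expensive, so the real content of the proof is an argument that reduces the range to something tractable. The key reduction exploits the structure of $\jp$: the first $\ell$ steps of $T$ applied to an $\ell$-bit number $n$ depend only on the $\ell$ low-order bits of $n$, i.e. on $n$ itself together with possibly a few carry bits that never reach beyond position $\ell$. More precisely, I would prove a lemma stating that for $n$ with $\ell$ binary digits, $T^{(\ell)}(n) = a n + b$ where $a = 3^{k}/2^{\ell}$ with $k$ the number of odd terms encountered, and $(k, b)$ are determined by the residue of $n$ modulo $2^{\ell}$ — which is just $n$. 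This is the standard "$2^\ell$-ary coset" analysis of the Collatz map.

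**Main steps.** First, set up the parity-vector / residue machinery: for each $\ell$, the map $n \mapsto (T(n), T^{(2)}(n), \dots, T^{(\ell)}(n))$ is governed by the residue class of $n$ mod $2^\ell$, and one jump $\jp(n)$ is a fixed affine function on each such class. Second, observe the doubling invariance from Example~\ref{jp(2n)}: since $\jp(2n) = \jp(n)$ and, more to the point, the whole orbit of $2n$ under $\jp$ eventually coincides with that of $n$, it suffices to check odd $n$; combined with $n \equiv 3 \bmod 4$ this already halves the work, but the bigger saving comes next. Third — and this is where the real speedup lives — use a sieve: rather than testing each $n$ in isolation, partition $[1, 2^{44})$ into blocks and, for residues modulo a suitable power of $2$, compute the common affine behaviour of the first jump (and second, third, \dots) simultaneously, so that for most residue classes one certifies $\ft(n)$ is small for the entire class at once, only descending to individual integers for the "hard" classes that survive several jumps without falling. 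In practice one runs $\jp$ iteratively but with 128-bit integer arithmetic and the length-$\ell$ acceleration built in, and records the maximum falling time seen; the claim is that this maximum is $14$, attained (one should exhibit the witness) by some explicit $n$.

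**The obstacle.** The genuine difficulty is not mathematical subtlety but making the computation both feasible and trustworthy. Feasible: $2^{42}$ starting values is borderline, so one needs the residue-class sieve above, careful bignum handling (a few jumps can blow $n$ up by a factor $(3/2)^{\ell}$, so intermediate values can have thousands of bits and one must either cap them or argue they are irrelevant once they exceed $n$ — in fact once an iterate exceeds $n$ we simply continue, and the bound on the number of jumps keeps the blow-up controlled), and probably parallelization over blocks. Trustworthy: one should cross-check against the known Collatz verification up to $2^{68}$ (which guarantees $\ft(n) < \infty$ throughout the range) and, ideally, re-run with an independent implementation. Finally, a small piece of genuine argument is needed to handle the boundary: the statement is about $n < 2^{44}$, but a jump from such $n$ may land on a larger integer, so one must be sure the "fall below $n$" test uses the original $n$ and that at most $14$ jumps always suffice before that happens — this is exactly what the exhaustive run establishes, with the worst case to be displayed explicitly in the text following the proof.
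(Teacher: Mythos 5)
The paper's entire proof of this proposition is the single sentence ``In a few days of computing time with CALCULCO'', i.e.\ an exhaustive computer verification over the stated range, which is exactly the approach you describe; your extra implementation detail goes beyond anything the paper records but does not change the nature of the argument. One caution about your proposed speedup: since $\jp(n)=T^{(\ell)}(n)$ with $\ell$ the full bit-length of $n$, the first jump is determined precisely by $n \bmod 2^{\ell}$, which for an $\ell$-bit $n$ is $n$ itself, so the residue-class sieve that accelerates stopping-time searches does not transfer to the falling time as directly as you suggest --- but a plain enumeration of the $\sim 2^{42}$ candidates is feasible on an HPC platform, which is evidently what the authors did.
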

\begin{proof} In a few days of computing time with CALCULCO~\cite{Cal}.
\end{proof}

\medskip
As shown in Table~\ref{falling time}, the smallest $n \in 4\N+3$ such that $\ft(n) \ge 14$, namely $n=12\hs235\hs060\hs455$, actually satisfies $\ft(n) = 14$ and $n > 2^{33}$.

\begin{definition}
A \emph{falling time record} is an integer $n \in 4\N+3$ such that $\ft(m) < \ft(n)$ for all $m \in 4\N+3$ with $m < n$.
\end{definition}

\begin{table}[ht]
\caption{Falling time records up to $2^{35}$}
\begin{center}
\begin{tabular}{|r|c|c|} \hline
$n \equiv 3 \bmod 4$ & $\lfloor\log_2(n)+1\rfloor$ & $\ft(n)$ \\
\hline \hline
3 & 2 & 2 \\ \hline
7 & 3 & 3 \\ \hline
27 & 5 & 8 \\ \hline
60\hs975 & 16 & 9 \\ \hline
1\hs394\hs431 & 21 & 10 \\ \hline
6\hs649\hs279 & 23 & 11 \\ \hline
63\hs728\hs127 & 26 & 13 \\ \hline
12\hs235\hs060\hs455 & 34 & 14 \\ \hline
\end{tabular}

\label{falling time}
\end{center}
\end{table}

The list of falling time records up to $2^{35}$ is given in Table~\ref{falling time}. It was built while establishing Proposition~\ref{prop ft 14}. For instance, we have $\ft(3)=2$, $\ft(7)=3$ and $\ft(n) \le 3$ for all $3 \le n < 27$ such that $n \equiv 3 \bmod 4$. The value $\ft(27)=8$ follows from the fact that $8$ jumps are needed from $27$ to fall below it, as shown by the orbit of 27 under jumps:
\begin{equation}\label{jumps from 27}
\O_{\jp}(27) = (27,71,137,395,566,3\hs644,650,53,8, 2,2,\dots).
\end{equation}
Interestingly, five of the falling time records in~Table~\ref{falling time} are also glide records, namely 3, 7, 27,  63\hs728\hs127 and 12\hs235\hs060\hs455, as seen by consulting~\cite{R2}.

Table~\ref{falling time} shows that the number 12 and a few smaller ones fail to occur as falling time records. One may then wonder about the smallest $n \in 4\N+3$ reaching $\ft(n)=12$.

The answer is to be found in Table~\ref{new falling}. Let us define a \emph{new falling time} as an integer $n \in 4\N+3$ such that  $\ft(n)$ is distinct from $\ft(m)$ for all smaller $m \in 4\N+3$. Of course, every falling time record is a new falling time. The list of new falling times we know so far, which are not already falling time records, is given in Table~\ref{new falling}.

\medskip
\begin{table}[ht]
\caption{Some new falling times}
\begin{center}
\begin{tabular}{|c||c|c|c|c|c|}
\hline
$n$ & 111 & 103 & 71 & 55 & 217\hs740\hs015 \\ \hline
$\ft(n)$ & 4 & 5 & 6 & 7 & 12 \\ \hline
\end{tabular}

\label{new falling}
\end{center}
\end{table}

\subsection{Integers satisfying $\ft(n) > 14$}
For $a,b \in \Z$, we denote by $[a,b]=\{n \in \Z \mid a \le n \le b\}$ the integer interval they span. Recalling Example~\ref{jp(2n)}, namely that $\ft(2m)=\ft(m)$ for all $m \ge 3$, a single integer $n$ satisfying $\ft(n) > 14$ yields \emph{infinitely many} integers $N$ satisfying $\ft(N) > 14$, namely $N=2^r n$ for all $r \ge 1$. However, the latter numbers have stopping time equal to $1$, and hence are not particularly interesting.

Only those integers $n$ satisfying $\ft(n) > 14$ and having a reasonably large stopping time are really interesting, for their apparent rarity and their relevance to the Collatz conjecture. Hence, we shall restrict our search to \emph{$24$-persistent} integers, i.e. to those $n$ satisfying
$$\sigma(n) \ge 24.$$
The property for $n$ of being $24$-persistent only depends on its class mod $2^{24}$. See~\cite{Te} for more details on the description of the condition $\sigma(n) \ge k$ by classes mod $2^k$. See also~\cite{Ev}. For $k=24$, the number of $24$-persistent classes mod $2^{24}$ is exactly $286\hs581$.

As shown below, the occurrence of $\ft(n) > 14$ among $24$-persistent numbers seems to be very rare. Here is our first computational result.
\begin{proposition} The smallest $24$-persistent integer $n$ such that $\ft(n) > 14$ is
$$
n_0=1\hs008\hs932\hs249\hs296\hs231.
$$
It satisfies $\ft(n_0)=15$, $\sigma(n_0)=886$ and $\lfloor \log_2(n_0)+1\rfloor = 50$.
\end{proposition}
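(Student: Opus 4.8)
The statement is computational in nature, and the plan is to establish it by an exhaustive, suitably pruned search, in the same spirit as Proposition~\ref{prop ft 14}. The crucial structural fact, already recalled in the text, is that $24$-persistence depends only on the residue class modulo $2^{24}$, and that there are exactly $286\,581$ such classes (among the $2^{22}$ classes that are $\equiv 3 \bmod 4$). First I would compute once and for all the list of these residues $r \in [0, 2^{24}-1]$, for instance by running the first $24$ steps of $T$ symbolically on the class $x \equiv r \bmod 2^{24}$ and keeping those $r$ for which no descent below the initial value has yet occurred; this is the standard tree computation described in~\cite{Te}. The search for integers with $\ft(n) > 14$ then ranges only over those lying in one of the corresponding $286\,581$ arithmetic progressions of common difference $2^{24}$, a density-$(286\,581/2^{24} \approx 1.7\times 10^{-2})$ subset of $\N$.

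Next I would cut down the search range. By Proposition~\ref{prop ft 14}, every $n \in [1, 2^{44}-1]$ with $n \equiv 3 \bmod 4$ satisfies $\ft(n) \le 14$; since a $24$-persistent integer has $\sigma(n) \ge 24 \ge 3$ and hence $n \equiv 3 \bmod 4$ by~\eqref{sigma le 2}, there is nothing to check below $2^{44}$. It therefore suffices to enumerate the $24$-persistent integers in the interval $[2^{44}, n_0]$ — about $1.7\times 10^{13}$ of them — and, for each such $n$, to compute the jump orbit $\O_{\jp}(n)$ term by term, recomputing the base-$2$ length at every step and aborting as soon as a term drops below $n$. One records the running maximum of $\ft$ over the candidates; the proposition follows once this maximum is seen to remain equal to $14$ throughout $[2^{44}, n_0)$ and to reach $15$ exactly at $n=n_0$, i.e. once $\O_{\jp}(n_0)$ is checked to have its first sub-$n_0$ term at the $15$th jump. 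The two auxiliary assertions are then immediate byproducts: $\sigma(n_0)=886$ is obtained by iterating $T$ from $n_0$ until the first term below $n_0$, and $\lfloor \log_2(n_0)+1\rfloor=50$ follows from $2^{49} < n_0 < 2^{50}$.

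The main obstacle is purely the computational cost: even after the two reductions above, of the order of $10^{13}$ candidates must be processed, each requiring up to roughly a dozen jumps and each jump a number of $T$-steps equal to the current bit length. Two features keep this within reach. First, the $286\,581$ residue classes can be handled independently, so the computation parallelizes well on a cluster such as CALCULCO~\cite{Cal}; second, intermediate orbit values stay modest — empirically only a small multiple of $n$ — so lightweight multiple-precision arithmetic suffices and the early-abort rule terminates the vast majority of orbits within a handful of jumps. A secondary, purely bookkeeping concern is correctness of the jump implementation: one must ensure that the length $\ell$ is recomputed from the current orbit value at each jump (as in the worked example $\jp(27)=71$, then $\jp(71)=137$), and cross-validation against the falling time records of Table~\ref{falling time} and the new falling times of Table~\ref{new falling} gives a convenient sanity check.
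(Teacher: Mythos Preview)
Your proposal is correct and follows the same approach as the paper, whose proof is simply ``In a few days of computing time with CALCULCO~\cite{Cal}.'' You have merely spelled out in detail a reasonable algorithmic strategy for that computation --- sieving by the $286\,581$ persistent residue classes, invoking Proposition~\ref{prop ft 14} to start at $2^{44}$, parallelizing, and early-aborting the jump orbits --- all of which is consistent with the paper's setup.
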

\begin{proof} In a few days of computing time with CALCULCO~\cite{Cal}.
\end{proof}

\subsubsection{The neighborhood of $g_{30}$}
It turns out that $n_0$ is a glide record, and as such is listed under the name
$$n_0=g_{30}$$
in Table~\ref{records} of Section~\ref{roosendaal}. We have verified by computer that \emph{$g_{30}$ is the smallest $24$-persistent integer $n$ satisfying $\ft(n) > 14$.} However, because of the restriction $\sigma(n) \ge 24$, we do not know whether $g_{30}$ is an actual falling time record.

In a small neighborhood of $g_{30}$ in the Collatz tree, we found $11$ more $24$-persistent integers $n$ satisfying $\ft(n) > 14$. By \emph{small neighborhood} of $g_{30}$, we mean here integers $m$ such that
$$
T^{(i)}(m) = T^{(j)}(g_{30})
$$
for some $i \in [0,40],j \in [0,30]$. It turns out that these $11$ integers all satisfy $\ft(n) = 15$, as $g_{30}$ itself. They are displayed in Table~\ref{24-persistent}.
\begin{table}[ht]
\caption{$24$-persistent integers satisfying $\ft(n)=15$.}
\vspace{-1em}
\begin{align*}
& 1\hs513\hs398\hs373\hs944\hs347, 1\hs702\hs573\hs170\hs687\hs391, 2\hs017\hs864\hs498\hs592\hs463, \\
& 2\hs553\hs859\hs756\hs031\hs087,  3\hs405\hs146\hs341\hs374\hs783, 3\hs830\hs789\hs634\hs046\hs631, \\
& 5107719512062175, 5\hs746\hs184\hs451\hs069\hs947, 6\hs464\hs457\hs507\hs453\hs691, \\ & 7\hs272\hs514\hs695\hs885\hs403, 22\hs370\hs169\hs558\hs105\hs279.
\end{align*}
\vspace{-1em}
\label{24-persistent}
\end{table}

\subsubsection{The neighborhood of $g_{32}$}
There is another glide record in Table~\ref{records} with falling time $15$, namely
$$
g_{32}= 180\hs352\hs746\hs940\hs718\hs527.
$$
In this case, looking at a somewhat larger neighborhood of $g_{32}$ in the Collatz tree, namely at all $m$ such that
$$
T^{(i)}(m) = T^{(j)}(g_{32})
$$
for some $i \in [0,50],j \in [0,30]$, we found four $24$-persistent integers $n$ reaching $\ft(n)=16$. These four integers are displayed in Table~\ref{ft(n)=16}.
\begin{table}[ht]
\caption{$24$-persistent integers satisfying $\ft(n)=16$.}
\vspace{-1em}
\begin{align*}
& 49\hs312\hs2600\hs554\hs790\hs303, \,\,\, 739\hs683\hs900\hs832\hs185\hs455, \\
& 986\hs245\hs201\hs109\hs580\hs607, \,\,\, 1\hs479\hs367\hs801\hs664\hs370\hs911.
\end{align*}
\vspace{-1em}
\label{ft(n)=16}
\end{table}

However, these four integers $n$ have a small stopping time. They all satisfy $\sigma(n) \in [35,48]$, as compared to $\sigma(g_{32})=966$. Hence again, they are not particularly interesting.

\medskip

This leads us to the following conjecture.

\begin{conjecture}\label{conjecture ft} There exists $B \ge 16$ such that $\ft(n) \le B$
for all $n \ge 3$.
\end{conjecture}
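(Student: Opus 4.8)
A complete resolution of Conjecture~\ref{conjecture ft} would in particular settle the Collatz conjecture, since the statement ``$\ft(n)<\infty$ for all $n\ge 3$'' is already equivalent to it; the plan below is therefore a strategy that would reduce the conjecture to one sharply localized obstruction of the usual $3x+1$ type. First I would reduce to large $n$. By Example~\ref{jp(2n)} we have $\ft(2m)=\ft(m)$, so it suffices to bound $\ft$ on odd integers, and for odd $n\not\equiv 3\bmod 4$ one checks directly that $\ft(n)$ is small; the core case is $n\in 4\N+3$. Proposition~\ref{prop ft 14} already gives $\ft(n)\le 14$ for all such $n<2^{44}$, so I would fix a threshold $N_0\ge 2^{44}$ (taking $N_0\ge 2^{68}$ if it helps to invoke Barina's verification) and reduce to exhibiting $B$ with $\ft(n)\le B$ for $n\ge N_0$, writing $\ell$ for the number of base-$2$ digits of $n$, which may now be assumed large.

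Next I would record a crude one-jump bound. Iterating the elementary inequality $T(x)\le(3x+1)/2$ over $\ell$ steps gives $\jp(n)=T^{(\ell)}(n)<(3/2)^{\ell}(n+1)\le 3^{\ell}<3\,n^{\log_2 3}$, since $n<2^{\ell}$ and $\ell\le 1+\log_2 n$. Thus a single jump can increase $\log_2$ by at most the factor $\log_2 3\approx 1.585$, the extreme being $n=2^{\ell}-1$ with $\jp(2^{\ell}-1)=3^{\ell}-1$; and — crucially — this extreme cannot recur, since $3^{\ell}-1$ is not of the form $2^{m}-1$.

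Then I would combine this with the typical contraction. Writing the parity vector of the first $\ell$ iterates of $T$ on $n$ and letting $w$ be its number of $1$'s (odd steps), one has $\jp(n)=(3^{w}n+c)/2^{\ell}$ with $c$ depending only on that vector; for $n$ equidistributed mod $2^{\ell}$ the quantity $w$ is distributed as a sum of $\ell$ fair coin flips, so $\log_2\jp(n)=\log_2 n+w\log_2 3-\ell+O(1)$ has mean $\log_2 n-c_0\ell+O(1)$ with $c_0=1-\tfrac12\log_2 3>0$, i.e. a typical jump sheds a constant fraction of the bits. Setting $a_k=\log_2\jp^{(k)}(n)$, I would feed the crude bound $a_1\le(\log_2 3)\,\ell$ from Step~2 into the (heuristically valid) recursion $a_{k+1}\approx(1-c_0)a_k$, legitimate for $k\ge 1$ provided $\jp^{(k)}(n)$ is ``generic''; since $(1-c_0)^{2}\log_2 3<1$, after about three jumps $a_k$ drops below $a_0=\ell$, so $\ft(n)$ would be bounded, by a value consistent with the observed $B$ and with the sharper heuristics of Section~\ref{sec large}.

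The hard part is making Step~3 unconditional, and this is exactly the classical difficulty. Two inputs are missing. First, a genuine pseudorandomness statement for $T$: one needs that after one jump the residue of $\jp(n)$ modulo a large power of $2$ is generic enough for the coin-flip model to be provably accurate at the next jump, uniformly in $n$ — equivalently, that the only residues forcing near-maximal growth are the rigid classes near $2^{m}-1$ and that $\jp$ does not chain such classes together; no such theorem for $T$ is known. Second, and more fundamentally, the scheme controls $\ft(n)$ only off an exponentially sparse exceptional set, and turning ``for all but a density-zero set of $n$'' into ``for all $n\ge 3$'' forces one to rule out every pathological orbit, hence to prove at least the Collatz conjecture itself. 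The realistic intermediate target — and, I expect, the most one can currently establish — is a natural-density version: for every $\varepsilon>0$ there is a finite $B_\varepsilon$ with $\ft(n)\le B_\varepsilon$ for all $n$ outside a set of density at most $\varepsilon$, obtained by transplanting Terras-type stopping-time estimates~\cite{Te} from $T$ to the iterated jump map.
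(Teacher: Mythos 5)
There is no proof to compare against: the statement you were asked to prove is labelled a \emph{conjecture} in the paper, and the authors offer only computational evidence (Proposition~\ref{prop ft 14}, the glide-record data, Tables~\ref{falling time}--\ref{records}) and the informal randomness heuristic of Section~\ref{sec large} in its support. Your proposal is likewise not a proof, and you say so yourself: Steps~3 and~4 rest on the coin-flip model for the parity vector of $\jp^{(k)}(n)$ being ``provably accurate'' at each subsequent jump, which is precisely the unproven pseudorandomness input, and you correctly observe that upgrading the resulting density statement to ``for all $n\ge 3$'' would already entail the Collatz conjecture. So the honest verdict is that your text is a reasonable elaboration of the same heuristic the authors themselves give (one jump randomizes the binary expansion, after which the drift $1-\tfrac12\log_2 3>0$ per jump should force a descent in $O(1)$ jumps), together with a correct diagnosis of why it cannot currently be made rigorous. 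That diagnosis is the valuable part; the ``proof'' part is a conditional sketch.

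One concrete error in your reduction step: it is not true that for odd $n\not\equiv 3\bmod 4$ one ``checks directly that $\ft(n)$ is small.'' The congruence $n\equiv 1\bmod 4$ controls the \emph{stopping time} ($\sigma(n)=2$ by~\eqref{sigma le 2}), not the falling time; the paper's own example $\ft(41)=8$ with $41\equiv 1\bmod 4$ shows that jumps can overshoot far past the first descent of the $T$-orbit, and Figure~\ref{F2:Histo} is included precisely to show that the class $1\bmod 4$ behaves statistically like the class $3\bmod 4$ rather than being trivially disposable. The authors restrict their record-keeping to $4\N+3$ because those integers are the \emph{interesting} ones, not because the others are settled. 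Any genuine attack on Conjecture~\ref{conjecture ft} must bound $\ft$ on all odd residues, and your Step~1 as written silently discards a case that is just as hard as the one you keep. Your proposed fallback --- a density-$1$ bound $\ft(n)\le B_\varepsilon$ via Terras-type estimates~\cite{Te} transplanted to the jump map --- is plausible and would be a worthwhile theorem, but it is strictly weaker than the conjecture and is not claimed, let alone proved, in the paper.
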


An even bolder conjecture, based on the data we currently have, would be to take $B=16$ above. Anyway, with whatever value of $B$, Conjecture~\ref{conjecture ft} constitutes a strong form of the Collatz conjecture.

\subsection{Glide records}\label{roosendaal}

Eric Roosendaal maintains the list of all currently known glide records~\cite{R2}, complete up to at least $2^{60}$. At the time of writing, there are 34 of them, denoted $g_1, \dots, g_{34}$ below. As noted in~\cite{R2}, only the first $32$ ones have been independently checked. The ten biggest are displayed in descending order in Table~\ref{records}. It turns out that
$$
\ft(g_1), \dots, \ft(g_{34}) \le 15.
$$
Moreover, among them, the highest value $\ft(n)=15$ is only reached by $g_{30}$ and $g_{32}$. Table~\ref{ft(n)=16} displays four $24$-persistent integers $n$ satisfying $\ft(n) = 16$ in the neighborhood of $g_{32}$. We do not know whether $\ft(n) \ge 17$ is at all reachable.
\begin{table}[ht]
\caption{Top ten known glide records}
\begin{center}
\begin{tabular}{|c||r|c|c|r|c|} \hline
$n$ &  & $\lfloor \log_2(n)+1\rfloor$ & glide of $n$ & $\sigma(n)$ & $\ft(n)$ \\
\hline \hline
$g_{34}$ & 2\hs602\hs714\hs556\hs700\hs227\hs743 & 62 & 1\hs639 & 1005 & 13 \\ \hline
$g_{33}$ & 1\hs236\hs472\hs189\hs813\hs512\hs351 & 61 & 1\hs614 & 990 & 14 \\ \hline
$g_{32}$ & 180\hs352\hs746\hs940\hs718\hs527 & 58 & 1\hs575 & 966 & \textbf{15} \\ \hline
$g_{31}$ & 118\hs303\hs688\hs851\hs791\hs519 & 57 & 1\hs471 & 902 & 12 \\ \hline
$g_{30}$ & 1\hs008\hs932\hs249\hs296\hs231 & 50 & 1\hs445 & 886 & \textbf{15} \\ \hline
$g_{29}$ & 739\hs448\hs869\hs367\hs967 & 50 & 1\hs187 & 728 & 12 \\ \hline
$g_{28}$ & 70\hs665\hs924\hs117\hs439 & 47 & 1\hs177 & 722 & 13 \\ \hline
$g_{27}$ & 31\hs835\hs572\hs457\hs967 & 45 & 1\hs161 & 712 & 13 \\ \hline
$g_{26}$ & 13\hs179\hs928\hs405\hs231 & 44 & 1\hs122 & 688 & 14 \\ \hline
$g_{25}$ & 2\hs081\hs751\hs768\hs559 & 41 & \hspace{0.28cm}988 & 606 & 12 \\ \hline
\end{tabular}
\label{records}
\end{center}
\end{table}

\subsection{Falling time distribution}

In three distinct graphics, we display the distribution of the values taken by the falling time function in large integer intervals. These graphics show that the proportion of the case $\ft(n) \ge 3$ in the integer intervals $[2^{\ell}, 2^{\ell+1}-1]$ tends to $0$ as $\ell$ grows.

\begin{itemize}
    \item  Figure~\ref{F:Histo} displays the proportion of the occurrence of $\ft(n)=1$, $\ft(n)=2$ and $\ft(n) \ge 3$, respectively, among all odd integers in the integer intervals $[2^{\ell}, 2^{\ell+1}-1]$ for $2 \le \ell \le 40$.

\item  Figure~\ref{F2:Histo} does the same but separates the cases $n \equiv 1 \bmod 4$ and $n \equiv 3 \bmod 4$. The purpose is to show that the former case, with stopping time $2$, behaves like the more interesting latter case, and so may be safely ignored.

\item  Finally, Figure~\ref{F3:Histo} is restricted to $24$-persistent integers in the integer intervals $[2^{\ell}, 2^{\ell+1}-1]$ for $24 \le \ell \le 50$.
\end{itemize}

\pgfplotsset{scale=0.5,ymax=1.1,ytick={0,0.5,1},axis x line=bottom,axis y line=left,ymajorgrids,xlabel={\vspace{-2em}$\ell$},ytick={0,1},xlabel near ticks}

\begin{figure}[ht]
\caption{Proportion of odd integers in $[2^\ell,2^{\ell+1}-1]$ with falling time equal to $1$, $2$ and greater than $2$, respectively. The integer $\ell$ goes from $2$ to $40$}
\vspace{0.5em}
\footnotesize
\begin{center}
\begin{tikzpicture}
\begin{axis}[xmin=1,xtick={2,20,40}]
\addplot[line width=1] table [x=l, y=a, col sep=comma] {datas/histo_40.csv};
\end{axis}
\draw(1.8,2.9) node{$\ft=1$};
\begin{scope}[shift={(4.5,0)}]
\begin{axis}[xmin=1,xtick={2,20,40}]
\addplot[line width=1] table [x=l, y=b, col sep=comma] {datas/histo_40.csv};
\end{axis}
\draw(1.8,2.9) node{$\ft=2$};
\end{scope}
\begin{scope}[shift={(9,0)}]
\begin{axis}[xmin=1,xtick={2,20,40}]
\addplot[line width=1] table [x=l, y=c, col sep=comma] {datas/histo_40.csv};
\end{axis}
\draw(1.8,2.9) node{$\ft\geq3$};
\end{scope}
\end{tikzpicture}
\end{center}
\label{F:Histo}
\end{figure}
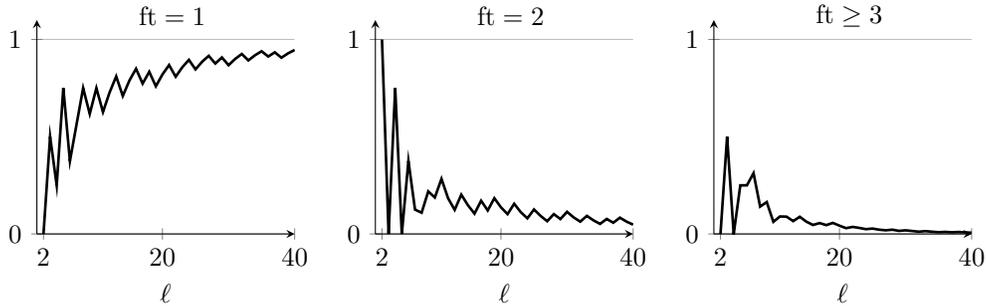

\begin{figure}[ht]
\caption{Same plot as for Figure \ref{F:Histo} except that integers are separated with respect to their class $1$ or $3$ modulo $4$. Gray curves are for integers congruent to $1$ modulo $4$ while black ones are for those congruent to $3$ modulo $4$.}
\vspace{0.5em}
\footnotesize
\begin{center}
\begin{tikzpicture}
\begin{axis}[xmin=1,xtick={2,20,40}]
\addplot[line width=1,color=gray] table [x=l, y=a, col sep=comma] {datas/histo_40_mod_1.csv};
\addplot[line width=1] table [x=l, y=a, col sep=comma] {datas/histo_40_mod_3.csv};
\end{axis}
\draw(1.8,2.9) node{$\ft=1$};
\begin{scope}[shift={(4.5,0)}]
\begin{axis}[xmin=1,xtick={2,20,40}]
\addplot[line width=1,color=gray] table [x=l, y=b, col sep=comma] {datas/histo_40_mod_1.csv};
\addplot[line width=1] table [x=l, y=b, col sep=comma] {datas/histo_40_mod_3.csv};
\end{axis}
\draw(1.8,2.9) node{$\ft=2$};
\end{scope}
\begin{scope}[shift={(9,0)}]
\begin{axis}[xmin=1,xtick={2,20,40}]
\addplot[line width=1,color=gray] table [x=l, y=c, col sep=comma] {datas/histo_40_mod_1.csv};
\addplot[line width=1] table [x=l, y=c, col sep=comma] {datas/histo_40_mod_3.csv};
\end{axis}
\draw(1.8,2.9) node{$\ft\geq3$};
\end{scope}
\end{tikzpicture}
\end{center}
\label{F2:Histo}
\end{figure}
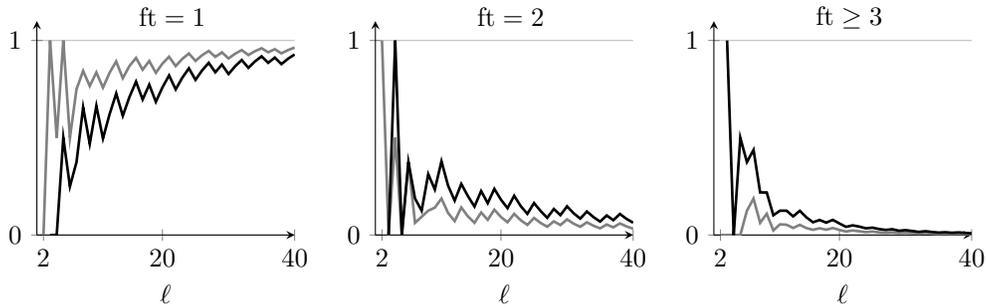

\begin{figure}[ht]
\caption{Proportion of $24$-persistent integers in $[2^\ell,2^{\ell+1}-1]$ with falling time equal to $1$, $2$ and greater than $2$, respectively. The integer $\ell$ goes from~$24$ to~$50$}
\vspace{0.5em}
\footnotesize
\begin{center}
\begin{tikzpicture}
\begin{axis}[xmin=23,xtick={24,30,40,50}]
\addplot[line width=1] table [x=l, y=a, col sep=comma] {datas/histo_24_50_persistent.csv};
\end{axis}
\draw(1.8,2.9) node{$\ft=1$};
\begin{scope}[shift={(4.5,0)}]
\begin{axis}[xmin=23,xtick={24,30,40,50}]
\addplot[line width=1] table [x=l, y=b, col sep=comma] {datas/histo_24_50_persistent.csv};
\end{axis}
\draw(1.8,2.9) node{$\ft=2$};
\end{scope}
\begin{scope}[shift={(9,0)}]
\begin{axis}[xmin=23,xtick={24,30,40,50}]
\addplot[line width=1] table [x=l, y=c, col sep=comma] {datas/histo_24_50_persistent.csv};
\end{axis}
\draw(1.8,2.9) node{$\ft\geq3$};
\end{scope}
\end{tikzpicture}
\end{center}
\label{F3:Histo}
\end{figure}
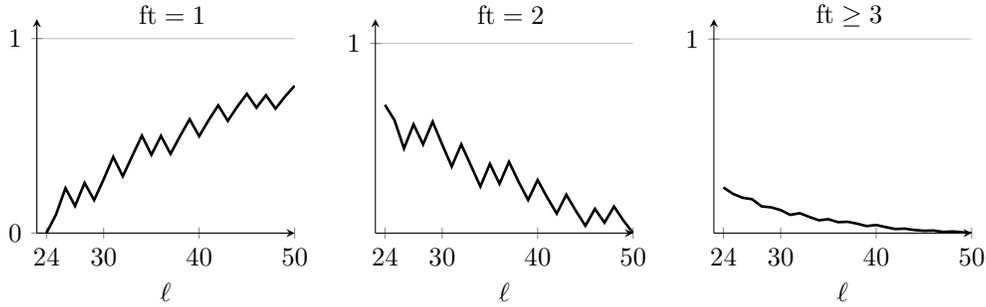

\subsection{A variant of jumps}\label{sec variant}

Let $h \in \N$. For all $n \in \N$, we define
$$
\jp_h(n)=T^{(h \ell)}(n)
$$
where, as before, $\ell$ is the number of digits of $n$ in base 2. This is not the same, of course, as the $h$-iterate $\jp^{(h)}(n)$. Note also that for $h=1$, we recover jumps, i.e. $\jp_1(n)=\jp(n)$. For $n \ge 3$, the \emph{$h$-falling time} $\ft_h(n)$ is defined correspondingly, as the smallest $k \ge 1$, if any, such that $\ft_h^{(k)}(n) < n$.

It turns out that for $h=18$, and for the glide records $g_1,\dots,g_{34}$, we have
$$
\ft_{18}(g_i)=1
$$
for all $1 \le i \le 34$. In view of that fact, is it true that $\ft_{18}(n)=1$ for all $n \ge 3$? We do not know. But we have verified it up to $n \le 2^{30}$, and it cannot be outright dismissed, given the conjectural behavior of $\ft(n)$ for very large $n$ as discussed in Section~\ref{sec large}. Of course, a positive answer would imply the Collatz conjecture. On the other hand, uncovering any counterexample would be quite a feat.

\section{The Syracuse version}\label{sec syracuse}

Let $\odd=2\N+1$ denote the set of odd positive integers. Another well-studied version of the $3x+1$ function is $\syr \colon \odd \to \odd$, defined on any $x \in \odd$ by
$$
\syr(x) = (3x+1)/2^\nu,
$$
where $\nu \ge 1$ is the largest integer such that $2^\nu$ divides $3x+1$. That is, $\syr(x)$ is the largest odd factor of $3x+1$. This specific version is called the \emph{Syracuse function} in~\cite{T}. It has been amply investigated in the past, though under different notation or names. For instance in~\cite{C}, where lower bounds on the length of presumed nontrivial cycles of $\syr(x)$ are given in terms of the convergents $p_n/q_n$ to $\log_2(3)$; or in~\cite{KM,KS,S}, where statistical properties of $\syr(x)$ and related maps are studied using the Structure theorem of Sinai, which we briefly recall in Section~\ref{sec syr variant} below.

\medskip
In analogy with the functions $\jp(n)$ and $\ft(n)$ related to the $3x+1$ function $T(x)$, we now introduce the corresponding functions $\sjp(n)$ and $\sft(n)$ related to the Syracuse version $\syr(x)$.

\begin{definition} We define the \emph{Syracuse jump function} $\sjp \colon \odd \to \odd$ by
$$
\sjp(n) = \syr^{(\ell)}(n), \textrm{ where } \ell=\lfloor \log_2(n) +1\rfloor.
$$
\end{definition}
\begin{example} \emph{We have $\sjp(1)=1$, $\sjp(3)=1$ and $\sjp(27)=\syr^{(5)}(27)=107$.}
\end{example}

Here is the corresponding Syracuse falling time.

\begin{definition} Let $n \in \odd \setminus\{1\}$. The \emph{Syracuse falling time} of $n$, denoted $\sft(n)$, is the least $k \ge 1$ such that $\sjp^{(k)}(n) < n$, or $\infty$ if there is no such $k$.
\end{definition}

\begin{example}\emph{We have $\sft(27)=6$, as witnessed by the orbit of $27$ under Syracuse jumps, namely
\begin{equation*}
\O_{\sjp}(27) = (27,107,233,377,911,53,1,1,\dots).
\end{equation*}}
\end{example}

\smallskip
As one may expect, the inequality $\sft(n) \le \ft(n)$ holds very often, but not always. For instance, for $n=199$, we have $\ft(199)=1$ but $\sft(199)=5$. The former equality follows from the orbit
$$
\O_T(199)=(199,299,449,674,337,506,253,380,190, \dots)
$$
and the value $\lfloor \log_2(199) +1\rfloor=8$, yielding $\jp(199)=190$, while the latter one follows from the orbit
$$
\O_{\syr}(199)=(199, 323, 395, 479, 577, 1, \dots).
$$

\begin{definition}
A \emph{Syracuse falling time record} is an integer $n \in 4\N+3$ such that $n \ge 7$ and $\sft(m) < \sft(n)$ for all $m \in 4\N+3$ with $m < n$.
\end{definition}

\begin{table}[ht]
\caption{Syracuse falling time records up to $2^{35}$}
\begin{center}
\begin{tabular}{|r|c|c|} \hline
$n \equiv 3 \bmod 4$ & $\lfloor \log_2(n)+1\rfloor$ & $\sft(n)$ \\
\hline \hline
7 & 3 & 2 \\ \hline
27 & 5 & 6 \\ \hline
6\hs649\hs279 & 23 & 7 \\ \hline
63\hs728\hs127 & 26 & 9 \\ \hline
\end{tabular}

\label{Syracuse falling}
\end{center}
\end{table}

The complete list of Syracuse falling time records up to $2^{35}$ is displayed in Table~\ref{Syracuse falling}. Compared with Table~\ref{falling time}, it turns out that all current Syracuse falling time records are also falling time records. The converse does not hold, as shown by the falling time records 60\hs975 and 1\hs394\hs431 in Table~\ref{falling time}.

\subsection{Current maximum}

The Collatz conjecture is equivalent to the statement $\sft(n) < \infty$ for all $n \in \odd \setminus \{1\}$. Again, it is likely that a stronger form holds, namely that $\sft(n)$ is \emph{bounded} on $\odd \setminus \{1\}$. Besides the computational evidence above and below, some heuristics point to that possibility in Section~\ref{sec large}. Similarly to Proposition~\ref{prop ft 14}, here is a computational result in that direction.

\begin{proposition}\label{prop sft 10} We have $\sft(n) \le 9$ for all $n \in [3, 2^{35}-1]$ such that $n \equiv 3 \bmod 4$.
\end{proposition}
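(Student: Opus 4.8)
The plan is to mirror, verbatim, the computational strategy used for Proposition~\ref{prop ft 14}: exhaustively verify the bound $\sft(n)\le 9$ for every $n\equiv 3\bmod 4$ in the range $[3,2^{35}-1]$ by direct iteration of the Syracuse jump function $\sjp$. First I would restrict attention to residues $3\bmod 4$, which is legitimate since for $n\equiv 1\bmod 4$ one has $\sigma(n)=2$ by~\eqref{sigma le 2}, and in the Syracuse setting $n\equiv 1\bmod 4$ falls in a single Syracuse step; even $n$ are outside $\odd$. So the only interesting residue class is $4\N+3$, of which there are about $2^{33}$ members below $2^{35}$.

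Next, for each such $n$ I would compute the orbit $\O_{\sjp}(n)=(n,\sjp(n),\sjp^{(2)}(n),\dots)$ step by step, where each application of $\sjp$ itself expands to $\ell=\lfloor\log_2(n)+1\rfloor$ applications of $\syr$. At each stage I check whether the current iterate has dropped strictly below the starting value $n$; the first index $k$ at which this happens is $\sft(n)$, and I record whether $k\le 9$. A subtle point worth flagging is that $\sjp$ increases the bit length of its argument (the iterates can and do temporarily exceed $2^{35}$, as the orbit of $27$ illustrates with the term $911$), so the implementation must use arbitrary-precision (or at least $128$-bit) arithmetic rather than native $64$-bit integers, and must recompute $\ell$ from the current iterate at every jump. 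As in Proposition~\ref{prop ft 14}, this is run on the CALCULCO cluster~\cite{Cal}; the search is embarrassingly parallel, since the orbits for distinct starting values are independent, so the range $[3,2^{35}-1]$ can be partitioned into blocks handled by separate cores.

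A standard speed-up I would incorporate is early termination using previously computed data: once $\sjp^{(j)}(n)$ falls below $n$ we stop, and if it falls below \emph{some already-processed} smaller integer whose bound is known, we may inherit that information — though for a clean verification one can simply run each orbit to its own first descent. I would also double-check the four claimed Syracuse falling time records of Table~\ref{Syracuse falling} (in particular $\sft(63\hs728\hs127)=9$, which shows the bound $9$ is attained and hence sharp within this range) as a consistency test of the code against the worked examples $\sft(27)=6$ and $\sft(199)=5$ given in the text.

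The main obstacle is not mathematical but computational: confidence in an exhaustive machine check. The real work lies in (i) making sure the arithmetic never silently overflows when the jump iterates balloon above the $2^{35}$ window, (ii) correctly handling the boundary recomputation of $\ell$ at each Syracuse jump, and (iii) keeping the total running time manageable — roughly $2^{33}$ starting values, each requiring up to nine jumps of up to $\sim\ell\le 35$ Syracuse steps, i.e. on the order of $10^{12}$ elementary operations, which is exactly the ``few days of computing time'' regime already reported for the analogous Proposition~\ref{prop ft 14}. No new idea beyond careful, parallelized, arbitrary-precision iteration is needed.
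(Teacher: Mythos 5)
Your proposal matches the paper's approach exactly: the paper's proof is simply an exhaustive computer verification on the CALCULCO platform, with no further mathematical content. Your additional implementation remarks (arbitrary-precision arithmetic, recomputing $\ell$ at each jump, parallelization, and consistency checks against the records of Table~\ref{Syracuse falling}) are sensible but go beyond what the paper records.
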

\begin{proof} By computer with CALCULCO~\cite{Cal}.
\end{proof}

\medskip
As yet another hint pointing to the same direction, it turns out that
\begin{equation}\label{sft for glides}
\sft(g_1), \dots, \sft(g_{34}) \le 10
\end{equation}
for the 34  currently known glide records. For definiteness, Table~\ref{10 largest} displays the Syracuse falling times of the top ten glide records as listed in Table~\ref{records}.

\smallskip
\begin{table}[ht]
\caption{Syracuse falling times of top ten glide records}
\begin{center}
\begin{tabular}{|c||c|c|c|c|c|c|c|c|c|c|}
\hline
$n$ & $g_{25}$ & $g_{26}$ & $g_{27}$ & $g_{28}$ & $g_{29}$ & $g_{30}$ & $g_{31}$ & $g_{32}$ & $g_{33}$ & $g_{34}$ \\ \hline \hline
$\sft(n)$ & 9 & 8 & 8 & 8 & 8 & \textbf{10} & 8 & \textbf{10} & 9 & 8 \\
\hline
\end{tabular}

\label{10 largest}
\end{center}
\end{table}

Among the $g_i$, and as in Section~\ref{roosendaal} for the falling time, only $g_{30}$ and $g_{32}$ reach the current maximum of the Syracuse falling time, namely $\sft(n)=10$. Interestingly, the biggest currently known glide record, namely $n=g_{34}$, only satisfies $\sft(n)=8$. With Proposition~\ref{prop sft 10} and~\eqref{sft for glides} in the background, here is our formal conjecture.

\begin{conjecture}\label{conjecture sft} There exists $C \ge 10$ such that $\sft(n) \le C$ for all $n \equiv 3 \bmod 4$.
\end{conjecture}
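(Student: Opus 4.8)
\medskip
\noindent\textbf{A proof strategy.} Since Conjecture~\ref{conjecture sft} implies the Collatz conjecture, an unconditional proof is out of reach; what follows is only a program whose completion would yield the asserted bound. The plan is to split the odd integers $n\equiv 3\bmod 4$ into a \emph{generic} part, on which a single Syracuse jump already falls below $n$, and a thin \emph{exceptional} part, on which a bounded number of further jumps must be shown to suffice.

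For the generic part, write $\syr^{(k)}(n)=3^k n / 2^{\nu_1+\cdots+\nu_k}$, where $\nu_i\ge 1$ is the $2$-adic valuation met at the $i$-th Syracuse step. If $n$ has bit-length $\ell$, then
$$
\sjp(n)=\syr^{(\ell)}(n)=n\cdot(3/4)^{\ell}\cdot 2^{\,2\ell-(\nu_1+\cdots+\nu_\ell)}.
$$
The first step would be to invoke the Structure theorem of Sinai, in the form used in~\cite{KM,KS,S}, to show that for all $n$ in a dyadic block $[2^{\ell},2^{\ell+1})$ outside a subset of relative density $o(1)$, one has $|2\ell-(\nu_1+\cdots+\nu_\ell)|=O\!\big(\sqrt{\ell\log\ell}\,\big)$, so that the geometric factor $(3/4)^{\ell}$ dominates and $\sjp(n)<n$, i.e.\ $\sft(n)=1$. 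This is precisely the decay of the $\sft\ge 2$ proportion visible in Figure~\ref{F3:Histo}, and it reduces the problem to the exceptional set, where $\nu_1+\cdots+\nu_\ell$ is anomalously close to its minimum $\ell$.

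For the exceptional part, I would track the potential $\Phi_j=\log_2\sjp^{(j)}(n)-\log_2 n$ along the orbit of $n$ under $\sjp$ and argue that it must become negative after at most $C$ jumps. Two ingredients are needed. First, a crude uniform bound: since $\nu_1+\cdots+\nu_\ell\ge\ell$, one jump multiplies a value by at most $(3/2)^{\ell}$, so a single excursion cannot overshoot arbitrarily far. Second, and crucially, a \emph{worst-case reduction} showing that the extremal behaviour is realised by the family $2^{\ell}-1$: along $\syr$ the number $2^{\ell}-1$ passes through $3^{a}2^{\ell-a}-1$ with all valuations equal to $1$ until the power of $2$ is exhausted, at which point the shape $2^{a}3^{b}-1$ forces $\nu\ge 2$ (and, by the lifting-the-exponent lemma applied to $v_2(3^{\ell}-1)$, much more when $\ell$ is even), so that the jump endpoint is of order $3^{\ell}$ up to a power of $2$ of size $O(\log\ell)$; inputs of this highly structured kind make the \emph{next} jumps strongly contracting, so the excursion is self-limiting. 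One would then establish, by a finite but heavy computation over residue classes modulo a suitable power of $2$ (extending the bookkeeping of $24$-persistent classes from Section~\ref{sec ft}), that every $n$ whose first jump is nearly as expansive as that of $2^{\ell}-1$ returns below $n$ within a uniformly bounded number of further jumps, and that this bound stabilises as $\ell\to\infty$; together with Proposition~\ref{prop sft 10} for $n<2^{35}$ and Barina's verification up to $2^{68}$~\cite{B}, this would pin down an explicit value of $C$. That the family $2^{\ell}-1$ is indeed worst, for $\ell$ up to $500\hs000$, is exactly the content of the computations reported in Section~\ref{sec 2^l-1}.

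The decisive obstacle is the passage from statistics to individual orbits in the exceptional part: Sinai's theorem controls the \emph{distribution} of the valuations $\nu_i$, not the deterministic sequence attached to a given exceptional $n$, and at present no effective mechanism is known that forbids a long upward excursion of a Syracuse orbit before it drops below its starting point — this is precisely the core difficulty of the $3x+1$ problem, so this step cannot be expected to go through with current tools. A secondary difficulty is to make the worst-case reduction rigorous: it requires controlling $v_2(3^{k}n+r)$ along $\syr$-orbits of numbers close in shape to $2^{a}3^{b}-1$, where lower bounds for linear forms in logarithms, in the spirit of the cycle-length estimates of~\cite{C,E}, give partial excursion bounds but fall short of the uniform constant $C$.
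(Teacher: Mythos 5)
The statement you are addressing is a \emph{conjecture}, and the paper offers no proof of it: its only support there is the computation behind Proposition~\ref{prop sft 10} ($\sft(n)\le 9$ for $n\equiv 3\bmod 4$ up to $2^{35}$), the values $\sft(g_i)\le 10$ for the known glide records, and the informal randomness heuristic of Section~\ref{sec large}. Your submission is likewise not a proof but a research program, and you say so yourself; so the honest verdict is that there is a genuine gap, indeed the central open gap of the whole subject. Concretely: the step in which you pass from Sinai-type distributional control of the valuations $(\nu_1,\dots,\nu_\ell)$ to a \emph{deterministic, uniform} bound on the number of Syracuse jumps for every individual $n$ in the exceptional set is exactly the statistics-to-orbits barrier that blocks all known approaches (including Tao's~\cite{T}, which only reaches ``almost all'' statements). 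No finite computation over residue classes modulo a power of $2$ can close this, because the exceptional set is nonempty in every dyadic block and the bound $C$ must hold for all of them; and the claim that the family $2^{\ell}-1$ realises the worst case is itself an unproved conjecture (it is an empirical observation from Section~\ref{sec 2^l-1}, not a reduction). Since Conjecture~\ref{conjecture sft} strictly implies the Collatz conjecture, any purported completion of your program would have to resolve that problem first.

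Two smaller technical points. The identity $\syr^{(k)}(n)=3^k n/2^{\nu_1+\cdots+\nu_k}$ is not exact: iterating $\syr(x)=(3x+1)/2^{\nu}$ gives $\syr^{(k)}(n)=\bigl(3^k n+\rho_k\bigr)/2^{\nu_1+\cdots+\nu_k}$ with a positive correction term $\rho_k<3^k$, so your display for $\sjp(n)$ should be stated as an approximation (harmless for the heuristic, but it should not be written as an equality). Second, your ``crude uniform bound'' only limits a \emph{single} jump by a factor $(3/2)^{\ell}$; it does nothing to prevent a long sequence of consecutive expanding jumps, which is precisely what a bound like $\sft(n)\le C$ must exclude, so it cannot serve as the first ingredient of the exceptional-case argument in the way you suggest.
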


Again, the truth of this conjecture would yield a strong positive solution of the Collatz conjecture. At the time of writing, no single positive integer $n \equiv 3 \bmod 4$ is known to satisfy $\sft(n) \ge 11$. Thus, a still bolder conjecture would be to take $C=10$ in Conjecture~\ref{conjecture sft}, or $C=12$ to be on a safer side. Whence the title of this paper.

\subsection{A variant of Syracuse jumps}\label{sec syr variant}

As in Section~\ref{sec variant} for jumps, we propose here an accelerated variant of Syracuse jumps. Let $h \in \N$. For all $n \in \odd$, we define
$$
\sjp_h(n)=\syr^{(h \ell)}(n),
$$
where $\ell$ is the number of digits of $n$ in base 2.  Of course, $\sjp_1(n)=\sjp(n)$. For $n \ge 3$, the \emph{Syracuse $h$-falling time} $\sft_h(n)$ is defined correspondingly, as the smallest $k \ge 1$, if any, such that $\sft_h^{(k)}(n) < n$. It turns out that for $h=12$, and for the glide records $g_1,\dots,g_{34}$, we have
$$
\sft_{12}(g_i)=1
$$
for all $1 \le i \le 34$. Again, we may ask whether $\sft_{12}(n)=1$ holds for all odd $n \ge 3$. A positive answer would imply the Collatz conjecture. We have verified it up to $n \le 2^{30}$, and our semi-random search did not yield any counterexample.

Anyway, the occurrence of $\sft_{12}(n)=1$ as $n$ grows to infinity is probably overwhelming; and, just possibly, tools such as Sinai's structure theorem and its applications~\cite{KM,KS,S} might help prove that this is indeed the case. But we shall not pursue here this line of investigation.

For convenience, let us briefly recall the statement of that theorem. Given $x \in 6\N+\{1,5\}$, let $x_i=\syr^{(i)}(x)$ for all $i \ge 0$, and let $k_i \ge 1$ be such that $x_{i}=(3x_{i-1}+1)/2^{k_i}$ for all $i \ge 1$. Moreover, for $m \ge 1$, set
$$
\gamma_m(x)=(k_1,\dots,k_m).
$$
Sinai's structure theorem states that \emph{given any $(k_1,\dots,k_m) \in \N^m$, the set of all $x \in 6\N+\{1,5\}$ such that $\gamma_m(x)=(k_1,\dots,k_m)$ consists of a unique and full congruence class mod $6\cdot 2^{k_1+\cdots+k_m}$ in $\N$.}

\section{The case $2^\ell-1$}\label{sec 2^l-1}
In sharp contrast with the stopping time of $2^\ell-1$, for which $\sigma(2^\ell-1) \ge \ell$ for all $\ell \ge 2$, the falling time and the Syracuse falling time of $2^\ell-1$ seem to remain very small as $\ell$ grows. Here is some strong computational evidence.

\begin{proposition}\label{prop ft 2^l-1} Besides $\ft(2^5-1)=\ft(2^6-1)=8$, we have $\ft(2^\ell-1) \le 5$ for all $2 \le \ell \le 500\hs000$ with $\ell\notin\{5,6\}$.
\end{proposition}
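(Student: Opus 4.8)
The plan is to verify the claim by direct computation, but organized so that the bulk of the work is a one-time precomputation independent of $\ell$. The starting point is the identity $\jp(2^\ell-1)=3^\ell-1$ from the second Example: a single jump from $n=2^\ell-1$ lands on $N_1:=3^\ell-1$. The falling time of $n$ is then the number of \emph{further} jumps needed for the orbit under $\jp$ to drop below $2^\ell-1$. Since $3^\ell-1 < 2^{\ell'}$ with $\ell' = \lceil \ell \log_2 3 \rceil$, the number $N_1$ has roughly $\ell\log_2 3 \approx 1.585\,\ell$ binary digits, so each subsequent jump applies about that many steps of $T$. The key observation driving feasibility is that all the numbers $N_1, N_2 = \jp(N_1), \dots$ that we must track have $O(\ell)$ bits, and there are only a bounded number of them to compute before we fall below $2^\ell-1$ (conjecturally at most four more jumps, i.e. $\ft \le 5$). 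So the whole verification for a single $\ell$ costs $O(\ell^2)$ or $O(\ell^{2+\epsilon})$ bit operations with schoolbook or fast arithmetic, and summing over $\ell \le 500\,000$ is entirely tractable.

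First I would set up big-integer arithmetic (or use an existing library) and, for each $\ell$ in the range, form $N_1 = 3^\ell - 1$ — ideally by computing $3^\ell$ once for the largest $\ell$ and reusing running products, or simply via fast exponentiation per $\ell$. Second, I would iterate the jump function: given a current value $N$, read off its bit-length $m = \lfloor \log_2 N\rfloor + 1$, apply $T$ exactly $m$ times (each step is a parity test plus either a right shift or a multiply-by-3-add-1-then-shift), and record the result $N' = \jp(N)$. Third, after each jump I compare against the threshold $2^\ell - 1$; the first time the running value falls strictly below it, I output that jump count plus $1$ (the initial jump) as $\ft(2^\ell-1)$. Fourth, I tabulate these values and confirm that the only $\ell \in [2, 500\,000]$ for which the count exceeds $5$ are $\ell = 5$ and $\ell = 6$, both giving $\ft = 8$, exactly as in~\eqref{jumps from 27} (note $2^5-1=31$ and $2^6-1=63$ lie in the orbit region governed by $27$'s behavior).

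The main obstacle is not mathematical but computational bookkeeping: one must be certain that the iteration actually terminates for every $\ell$ tested, i.e. that no orbit gets stuck above the threshold within the compute budget, and that intermediate values — which for a few bad $\ell$ might require several jumps and grow transiently — do not blow up beyond what memory and time allow. In practice this is controlled because, by the heuristics of Section~\ref{sec large} and the empirical data, the orbit descends after very few jumps; but a rigorous run must impose and check an explicit cap (say, abort and flag if more than, say, $20$ jumps are needed), and verify that the flag is never raised except to record the genuine value at $\ell \in \{5,6\}$. A secondary concern is simply the reliability of the large-scale computation: as with Proposition~\ref{prop ft 14}, this is handled by running it on CALCULCO~\cite{Cal} and, ideally, with an independent reimplementation as a cross-check. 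No cleverness beyond careful arithmetic and the identity $\jp(2^\ell-1)=3^\ell-1$ is needed; the proof is, as stated, a report of that computation.
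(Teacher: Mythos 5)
Your proposal matches the paper's approach: the proposition is proved purely by direct computation (the paper's proof reads, in full, ``In a few days of computing time with CALCULCO''), and your plan — iterate $\jp$ starting from $2^\ell-1$ for each $\ell\le 500\hs000$, using $\jp(2^\ell-1)=3^\ell-1$ for the first jump and counting jumps until the orbit drops below $2^\ell-1$ — is exactly that computation, organized sensibly. The only quibble is your parenthetical tying $\ft(31)$ and $\ft(63)$ to the orbit of $27$ in~\eqref{jumps from 27}; these are distinct starting values whose falling times must be computed directly, but this does not affect the argument.
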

\begin{proof} In a few days of computing time with CALCULCO~\cite{Cal}.
\end{proof}

\medskip
Moreover, the value $\ft(2^\ell-1)=5$ seems to occur finitely many times only, the last one being presumably at $\ell = 132$. In turn, the value $\ft(2^\ell-1)=4$ seems to occur infinitely often. Whence the following conjecture, verified by computer up to $\ell = 500\hs000$.

\begin{conjecture}\label{conj ft 2^l-1} We have $\ft(2^{\ell}-1) \le 4$ for all $\ell \ge 133$.
\end{conjecture}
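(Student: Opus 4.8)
\medskip
\noindent\textbf{Towards a proof.}
The plan is to follow the jump-orbit of $2^\ell-1$ for four steps. The first step is forced and explicit: by the identity $\jp(2^m-1)=3^m-1$ recalled above, $n_1:=\jp(2^\ell-1)=3^\ell-1>2^\ell-1$, so $\ft(2^\ell-1)\ge 2$ and it remains to show that one of $n_2:=\jp^{(2)}(2^\ell-1)$, $n_3:=\jp^{(3)}(2^\ell-1)$, $n_4:=\jp^{(4)}(2^\ell-1)$ lies below $2^\ell-1$. Set $\lambda_k=\log_2 n_k$, so $\lambda_0$ is just below $\ell$, $\lambda_1=\log_2(3^\ell-1)$ is just below $\ell\log_2 3$, and $n_1$ has $\ell_1=\lfloor\lambda_1+1\rfloor=\ceil{\ell\log_2 3}$ digits in base $2$.

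The quantitative heart is the effect of a jump on a \emph{generic} integer. A jump at $n$ applies about $\lambda=\log_2 n$ successive maps $T$; on a random input close to half of these are halvings (factor $1/2$) and half are $(3x+1)/2$-steps (factor close to $3/2$), so the output is of order $n\,(3/4)^{\lambda/2}=2^{c\lambda}$ with contraction exponent $c=\tfrac12\log_2 3\approx 0.7925$. Iterating this from $\lambda_1\approx\ell\log_2 3$ predicts
\begin{equation*}
\lambda_2\approx c\lambda_1\approx 1.26\,\ell,\qquad \lambda_3\approx c^2\lambda_1\approx 0.995\,\ell,\qquad \lambda_4\approx c^3\lambda_1\approx 0.79\,\ell.
\end{equation*}
So $n_2$ should stay well above $2^\ell-1$, $n_4$ should fall comfortably below it, and $n_3$ should sit almost exactly on the threshold $2^\ell$. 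This near-coincidence is precisely why $\ft(2^\ell-1)$ is typically $4$ (and occasionally $3$), with the rare excursions to $5$ occurring, for small $\ell$, when accumulated fluctuations push even $\lambda_4$ above $\ell$. Turning the conjecture into a theorem amounts to showing that for every $\ell\ge 133$ the \emph{actual} orbit never deviates that far upward, i.e.\ that $n_4<2^\ell-1$ unless $n_2$ or $n_3$ has already fallen.

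The next move would be to exploit what structure $3^\ell-1$ does have. For $\ell$ odd one checks $T^{(2)}(3^\ell-1)=(3^{\ell+1}-1)/4$, and after $v_2(\ell+1)$ further halvings one reaches the odd part of $3^{\ell+1}-1$; equivalently, $\syr$ applied to the odd part of $3^\ell-1$ yields the odd part of $3^{\ell+1}-1$. Unfortunately this algebraic regularity dissipates after one or two $\syr$-steps, and from then on the orbit of $3^\ell-1$ behaves like that of a random integer. Controlling it then requires an equidistribution input: the residue of $3^\ell-1$ modulo $2^{t}$ runs periodically (with period $2^{t-2}$) as $\ell$ varies, and one would need the halving vector $(k_1,\dots,k_m)$ of its $\syr$-orbit, for $m$ of order $\ell$, to be statistically close to a typical one in the sense of Sinai's structure theorem recalled in Section~\ref{sec syr variant}. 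Granting such an effective statement, a Chernoff-type bound would give $\lambda_3-c^2\lambda_1$ and $\lambda_4-c^3\lambda_1$ of size $O(\sqrt{\ell})$; since the mean of $\lambda_4$ lies below $\ell$ by a fixed fraction, a union bound over $\ell$ then yields $n_4<2^\ell-1$ for all $\ell$ past an explicit threshold, and the finitely many remaining $\ell$ — in particular the claim that the last exception is $\ell=132$ — would be settled by the computation behind Proposition~\ref{prop ft 2^l-1}.

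The main obstacle is exactly this last input. Establishing that the Collatz orbit of the thin sequence $(3^\ell-1)_\ell$ inherits the pseudo-random halving statistics is out of reach of current methods; it is essentially a quantitative Collatz-type statement about a family over which we have no independent leverage, and in its absence — failing some unforeseen closed form for $\jp^{(2)}(2^\ell-1)$ or $\jp^{(3)}(2^\ell-1)$, which the data above make unlikely — the assertion must stay a conjecture, resting on the verification up to $\ell=500\hs000$ and on the heuristics of Section~\ref{sec large}.
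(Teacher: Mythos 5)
This statement is a \emph{conjecture} in the paper: the authors offer no proof, only the computational verification up to $\ell\le 500\hs000$ behind Proposition~\ref{prop ft 2^l-1} and the qualitative randomness heuristic of Section~\ref{sec large}. Your write-up is therefore in exactly the right position --- you correctly refuse to claim a proof and end where the paper ends, with computation plus heuristics --- and the heuristic content you supply is sound and in fact sharper than what the paper records. The forced first step $\jp(2^\ell-1)=3^\ell-1$ is exactly the paper's Example following~\eqref{2^l-1}; your contraction exponent $c=\tfrac12\log_2 3$ is the standard one, and the resulting values $\lambda_2\approx(\log_2 3)^2\ell/2\approx 1.256\,\ell$, $\lambda_3\approx(\log_2 3)^3\ell/4\approx 0.9953\,\ell$, $\lambda_4\approx(\log_2 3)^4\ell/8\approx 0.789\,\ell$ check out numerically. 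The observation that $\lambda_3$ sits essentially on the threshold $2^\ell$ is a genuinely useful addition: it explains both why the conjectured bound is $4$ rather than $3$ and why the question is delicate, hinging on a near-coincidence $(\log_2 3)^3\approx 4$ --- none of which the paper makes explicit. Your identification of the missing ingredient is also accurate: one would need effective pseudo-randomness of the halving exponents along the orbit of the thin family $3^\ell-1$ (a Sinai-type statement with error terms for a sequence of density zero), which is far beyond current technology; the fluctuation bound of order $\sqrt{\ell}$ you would need is exactly what fails to be provable. The only caveat is that even granting Gaussian-type fluctuations of size $O(\sqrt{\ell})$ around a mean $0.9953\,\ell$ for $\lambda_3$, the event $\lambda_3>\ell$ has probability bounded away from $0$ for each $\ell$, so the conclusion that \emph{four} jumps always suffice (rather than almost always) would still require the fourth step's comfortable margin to absorb the third step's overshoot --- your union-bound sentence is where a rigorous argument would have to do real work, and you are right to flag it as out of reach.
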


\smallskip
Here are the analogous statement and conjecture for the Syracuse falling time.

\begin{proposition}\label{prop sft 2^l-1} Besides $\sft(2^5-1)=\sft(2^6-1)=5$, and $\sft(2^{24}-1) = 4$, we have \vspace{-0.4cm}
\begin{align*}
& \sft(2^\ell-1) \in \{2,3\} \textrm{ for all }\ell \in [2,4\hs624] \setminus \{5,6,24\}, \\
& \sft(2^\ell-1) = 2 \textrm{ for all }\ell \in [4\hs625, 500\hs000].
\end{align*}
\end{proposition}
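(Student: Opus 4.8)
The plan is to reduce the bound to a finite computation over $\ell$, handling the first Syracuse jump at $2^\ell-1$ in closed form and the remaining jump(s) by direct large-integer iteration. To this end I would first establish the Syracuse analogue of the identity $\jp(2^\ell-1)=3^\ell-1$. By an immediate induction on $j$, using $3(3^j2^{\ell-j}-1)+1=2(3^{j+1}2^{\ell-j-1}-1)$ and the fact that $3^{j+1}2^{\ell-j-1}-1$ is odd as soon as $\ell-j-1\ge1$, one gets $\syr^{(j)}(2^\ell-1)=3^j2^{\ell-j}-1$ for $0\le j\le\ell-1$. Applying $\syr$ once more to $\syr^{(\ell-1)}(2^\ell-1)=2\cdot3^{\ell-1}-1$ then gives
$$
\sjp(2^\ell-1)=\syr^{(\ell)}(2^\ell-1)=\frac{3^\ell-1}{2^{v_2(3^\ell-1)}},
$$
the odd part of $3^\ell-1$, where by the lifting-the-exponent lemma $v_2(3^\ell-1)=1$ for $\ell$ odd and $v_2(3^\ell-1)=2+v_2(\ell)$ for $\ell$ even. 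Since $2^{v_2(\ell)}\le\ell$, this quantity is at least $(3^\ell-1)/(4\ell)$, which exceeds $2^\ell-1$ for all $\ell\ge9$; hence $\sft(2^\ell-1)\ge2$ apart from a handful of small $\ell$, which I would settle by direct computation. This first step needs no iteration: for every $\ell$ it produces $n_1:=\sjp(2^\ell-1)$ as an explicit odd integer (obtained from $3^\ell$ by repeated squaring), together with its exact bit length $\ell_1=\lfloor\log_2 n_1\rfloor+1$.

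Next, for each $\ell$ in the range I would compute $n_2:=\sjp^{(2)}(2^\ell-1)=\sjp(n_1)$ by applying $\syr$ exactly $\ell_1$ times to $n_1$ in exact big-integer arithmetic (each step: $x\mapsto3x+1$, then divide out the power of $2$), and test whether $n_2<2^\ell-1$. When it is, $\sft(2^\ell-1)=2$ by the first step; this is what occurs for every $\ell\in[4\hs625,500\hs000]$. For $\ell\in[2,4\hs624]$ outside the small initial segment, whenever $n_2\ge2^\ell-1$ I would perform one more jump and check $n_3<2^\ell-1$, which gives $\sft(2^\ell-1)=3$. The three exceptions $\ell\in\{5,6,24\}$ are treated individually, by continuing the jump orbit until it first falls below $2^\ell-1$: five jumps for $\ell=5,6$ and four for $\ell=24$.

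The dominant cost is the second jump. The integers involved reach about $\ell\log_2 3\approx7.9\times10^5$ bits near the top of the range, and it takes $\ell_1\approx1.585\,\ell$ Syracuse steps; since multiplying by $3$ and stripping trailing zeros are linear in the bit length, each step costs $O(\ell)$ bit operations, the work per $\ell$ is $O(\ell^2)$, and the total over $\ell\le500\hs000$ is of order $10^{17}$ bit operations. This is heavy but perfectly parallelizable over $\ell$, hence a few days on CALCULCO, exactly as for Proposition~\ref{prop ft 2^l-1}. A useful acceleration is to carry a floating-point estimate of $\log_2$ of the running Syracuse iterate and revert to exact arithmetic only inside a safety window around $\ell$: heuristically the orbit of $n_1$ drops to roughly $2^{0.93\,\ell}$, well below $2^\ell-1$, so an exact confirmation is needed only near the end of the orbit. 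The real obstacle is conceptual rather than computational: there appears to be no closed form for the odd part of $3^\ell-1$, nor for the orbit of the second jump, and the total number of factors of $2$ removed along that orbit exceeds the threshold needed for $n_2<2^\ell-1$ only by a margin that, in standard-deviation units, grows like $\sqrt\ell$ — overwhelming for large $\ell$ but only a few standard deviations around $\ell\approx4\hs625$. This makes a uniform argument look out of reach and a separate check for each $\ell$ unavoidable; the one delicate point is then to get the comparison with $2^\ell-1$ exactly right — verified big-integer arithmetic, correct boundary handling — around the transition between $\ell=4\hs624$ and $\ell=4\hs625$.
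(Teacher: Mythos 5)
Your proposal is correct and matches the paper's approach: the paper's entire proof is ``in a few days of computing time with CALCULCO,'' i.e.\ a direct computer verification over $\ell \le 500\hs000$, which is exactly the computation you describe. Your closed form for the first jump (that $\sjp(2^\ell-1)$ is the odd part of $3^\ell-1$, with $v_2(3^\ell-1)$ given by lifting the exponent) and the cost analysis are sound additions that the paper leaves implicit.
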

\begin{proof} In a few days of computing time with CALCULCO~\cite{Cal}.
\end{proof}

\bigskip
This leads us to the following conjecture, true up to $\ell \le 500\hs000$.
\begin{conjecture}\label{conj sft 2^l-1} We have $\sft(2^\ell-1)=2$ for all $\ell \ge 4\hs625$.
\end{conjecture}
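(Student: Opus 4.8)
The plan is to determine the first Syracuse jump exactly, settle the easy bound $\sft(2^\ell-1)\ge 2$ by a size estimate, and then reduce the hard half, $\sjp^{(2)}(2^\ell-1)<2^\ell-1$, to a statement about one explicitly given Syracuse orbit. First I would iterate $\syr$ on $2^\ell-1$. The identity $T(2^a3^b-1)=2^{a-1}3^{b+1}-1$ (for $a\ge1$) gives $\syr(2^a3^b-1)=2^{a-1}3^{b+1}-1$ whenever $a\ge2$, since then $3(2^a3^b-1)+1=2(2^{a-1}3^{b+1}-1)$ with the second factor odd. Starting from $2^\ell-1=2^\ell3^0-1$, the first $\ell-1$ Syracuse steps each strip one factor $2$ and yield $\syr^{(i)}(2^\ell-1)=2^{\ell-i}3^i-1$ for $0\le i\le \ell-1$. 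The $\ell$th step is different: from $\syr^{(\ell-1)}(2^\ell-1)=2\cdot3^{\ell-1}-1$ one gets $3(2\cdot3^{\ell-1}-1)+1=2(3^\ell-1)$, and since $3^\ell-1$ is itself even this gives
$$
\sjp(2^\ell-1)=\syr^{(\ell)}(2^\ell-1)=\frac{3^\ell-1}{2^{v}}=:N_\ell,
$$
the odd part of $3^\ell-1$, where $v=v_2(3^\ell-1)$ equals $1$ for $\ell$ odd and $2+v_2(\ell)$ for $\ell$ even (lifting the exponent). As $v\le 2+\log_2\ell$ we have $N_\ell\ge(3^\ell-1)/(4\ell)>2^\ell-1$ for every $\ell\ge4625$ (indeed for all $\ell\ge9$), so the first jump does not fall and $\sft(2^\ell-1)\ge2$.

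Next I would analyse the second jump. The integer $N_\ell$ has binary length $m:=\floor{\log_2(3^\ell-1)+1}-v=\floor{\ell\log_2 3}+1-v=(\log_2 3)\,\ell+O(\log\ell)$, so $\sjp^{(2)}(2^\ell-1)=\syr^{(m)}(N_\ell)$. The elementary expansion of an $m$-fold Syracuse iterate gives
$$
\syr^{(m)}(N_\ell)=\frac{3^mN_\ell}{2^{S}}+\varepsilon_m,\qquad 0<\varepsilon_m<(3/2)^m,
$$
where $S=k_1+\dots+k_m$ is the total number of halvings made in those $m$ steps. Here $(3/2)^m=2^{((\log_2 3)^2-\log_2 3)\,\ell+O(\log\ell)}\approx 2^{0.927\,\ell}$, which is below $2^{\ell-1}$ for $\ell\ge4625$, so it suffices to prove $3^mN_\ell/2^{S}<2^{\ell-1}$, i.e.
$$
S\;>\;m\log_2 3+\log_2 N_\ell-\ell+1=\bigl((\log_2 3)^2+\log_2 3-1\bigr)\ell+O(\log\ell)\approx 3.097\,\ell.
$$
Heuristically this is very safe: by the philosophy of Sinai's structure theorem the exponents $k_i$ behave like independent geometric variables of mean $2$, so $S$ concentrates around $2m\approx 3.170\,\ell$ with deviations only of order $\sqrt\ell$, beating the threshold by a linear margin of about $0.07\,\ell$.

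The step I expect to be the real obstacle is making that last heuristic rigorous. The claim is that the particular integers $N_\ell=(3^\ell-1)/2^{v_2(3^\ell-1)}$ have Syracuse orbits that contract at the generic rate over their first $m\approx(\log_2 3)\ell$ steps. Unlike $2^\ell-1$, whose orbit is governed by the transparent rule $2^a3^b-1\mapsto2^{a-1}3^{b+1}-1$, the number $N_\ell$ displays no visible arithmetic structure — its $2$-adic expansion looks effectively random — so there is no handle on the $k_i$ and hence none on $S$. A lower bound of the shape $S>3.1\,\ell$ for these specific numbers seems, to me, essentially as hard as the stopping-time form of the Collatz conjecture itself. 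Reasonable intermediate goals would be to prove the inequality for all $\ell$ outside a set of density $0$, or conditionally on an effective equidistribution statement for the Sinai trajectories $\gamma_m(N_\ell)$; one mild comfort is that the problem is morally easier than full Collatz, since $N_\ell$ only has to dip once below $2^\ell-1$ rather than to reach $1$.
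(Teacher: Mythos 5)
This statement is a \emph{conjecture}: the paper offers no proof of it, only the computational verification of Proposition~\ref{prop sft 2^l-1} (checking $\ell\le 500\,000$) together with the informal randomness heuristic of Section~\ref{sec large}. So no complete argument was available to you, and your proposal correctly stops short of claiming one. The portion you do prove is sound and goes a bit beyond what the paper writes down explicitly: your computation that the first $\ell-1$ Syracuse steps follow the rule $2^a3^b-1\mapsto 2^{a-1}3^{b+1}-1$ and that $\sjp(2^\ell-1)$ equals the odd part of $3^\ell-1$ is the exact Syracuse analogue of the paper's identity $T^{(\ell)}(2^\ell-1)=3^\ell-1$, and your LTE evaluation of $v_2(3^\ell-1)$ gives a clean proof that the first jump cannot fall, i.e.\ $\sft(2^\ell-1)\ge 2$. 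Your reduction of the second half to the inequality $S>\bigl((\log_2 3)^2+\log_2 3-1\bigr)\ell+O(\log\ell)\approx 3.097\,\ell$ against an expected $S\approx 2m\approx 3.170\,\ell$ is a correct and useful quantification of the paper's heuristic; it makes visible how thin the margin is (about $0.07\,\ell$, versus fluctuations of order $\sqrt{\ell}$ under the geometric model), which is consistent with the paper's observation that the exceptional values $\sft(2^\ell-1)=3$ persist up to $\ell=4\,624$ before apparently dying out.

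The genuine gap is the one you name yourself: there is no known way to control the halving exponents $k_1,\dots,k_m$ along the Syracuse orbit of the specific integers $N_\ell=(3^\ell-1)/2^{v_2(3^\ell-1)}$, and a lower bound $S>3.1\,\ell$ for every individual $\ell\ge 4\,625$ is of the same nature as the stopping-time form of the Collatz conjecture. Sinai's structure theorem, which the paper recalls, describes the distribution of $\gamma_m(x)$ as $x$ ranges over a full congruence class, not along the sparse deterministic sequence $(N_\ell)_\ell$, so it gives at best a density-one or conditional statement of the kind you propose as an intermediate goal. Your assessment of where the difficulty lies matches the paper's own stance.
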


\section{For very large $n$}\label{sec large}

As hinted by the computational evidence and conjectures of Section~\ref{sec 2^l-1} on the case $n=2^{\ell}-1$, by intensive semi-random search, and by the heuristics below, it appears to be increasingly difficult for integers $n$ to satisfy $\ft(n) \ge 5$ or $\sft(n) \ge 3$ as they grow very large. Here then are still bolder conjectures.

\begin{conjecture}\label{conj ft}
We have $\ft(n) \le 4$ for all $n \ge 2^{500}$.
\end{conjecture}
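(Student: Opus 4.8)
It is natural to ask how one might attack Conjecture~\ref{conj ft}. Since it is a strong form of the Collatz conjecture, we do not expect an unconditional proof; what we sketch here is the shape of a conditional, heuristic argument, together with the point where it stalls.

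\emph{Step 1: reduce a jump to its $2$-adic data.} First I would recast jumps in the Terras--Everett framework~\cite{Te,Ev}: the parity vector $(\varepsilon_0,\dots,\varepsilon_{\ell-1})\in\{0,1\}^\ell$ of the first $\ell$ iterates of $T$ starting from $n$ is a bijective function of $n\bmod 2^\ell$, and if $k=\varepsilon_0+\dots+\varepsilon_{\ell-1}$ is the number of odd steps among them, then from $T(m)=(3m+1)/2$ or $m/2$ one gets $\jp(n)=(3^k/2^\ell)\,n+e$ with an integer-scaled error $0\le e<(3/2)^k$ depending only on $n\bmod 2^\ell$. Since $n\ge 2^{\ell-1}$, this gives $\jp(n)=(3^k/2^\ell)\,n\,(1+o(1))$, hence $\log_2\jp(n)=\log_2 n+k\log_2 3-\ell+o(1)$, and $\jp(n)<n$ is equivalent, up to a bounded correction in $k$, to $k<\ell\log_3 2\approx 0.631\,\ell$; in particular a jump shrinks the bit-length by a positive linear amount unless $k$ is atypically large. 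For the Syracuse version the analogous reduction is supplied exactly, rather than heuristically, by Sinai's structure theorem recalled in Section~\ref{sec syr variant}: $\sjp(n)=(3^\ell n+c')/2^{K}$ with $K=k_1+\dots+k_\ell$, and $(k_1,\dots,k_\ell)$ realizes each value in $\N^\ell$ on one full residue class mod $6\cdot 2^{K}$.

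\emph{Step 2: large deviations.} Next I would adopt the standard pseudorandomness model, treating the $\varepsilon_i$ as fair coin flips (equivalently the $k_i$ as i.i.d. with $\Pr[k_i=j]=2^{-j}$, mean $2$). A single jump from an $\ell$-bit $n$ then fails to drop below $n$ only on an upper large deviation of the count $k$, of probability $2^{-\rho\ell+o(\ell)}$ with $\rho=1-H_2(\log_3 2)\approx 0.05$, where $H_2$ is the binary entropy; and, crucially, the law tilted by such a failure keeps $\jp(n)$ only just above $n$, so the next jump faces essentially the same $\ell$-bit problem and the four successive stretches may be treated as roughly independent. This yields a heuristic probability of order $2^{-4\rho\ell}$ for $\ft(n)\ge 5$. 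The parallel computation for $\sjp$, where $K$ has mean $2\ell$ and the event $K<\ell\log_2 3$ has Cram\'er rate $I\approx 0.055$, gives probability $\approx 2^{-0.16\,\ell}$ for $\sft(n)\ge 3$. These estimates account for the sparsity visible in Figures~\ref{F:Histo}--\ref{F3:Histo} and for the ranges already verified in Propositions~\ref{prop ft 14} and~\ref{prop ft 2^l-1}, and, since $\sum_\ell 2^\ell\,2^{-\rho k\ell}<\infty$ once $\rho k>1$, they already give $\ft(n)\le 20$ and $\sft(n)\le 12$ for all but finitely many $n$ --- the rigorous shadow of the value $16$ in Conjecture~\ref{conjecture ft} and of the $12$ in the title of this paper.

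\emph{Step 3: the obstacle, and the role of $2^{500}$.} The hard part is the gap between such bounds and the value $4$, and more sharply the passage from ``density zero'' to the assertion that the exceptional set $\{n:\ft(n)\ge 5\}$ lies entirely in $[1,2^{500})$. Because $4\rho<1$, a naive union bound over the $2^\ell$ residues of bit-length $\ell$ does not close, so the plan would be to replace it by an exact finite certification: by Step~1, membership of $n$ in the exceptional set is decided by $n\bmod 2^{M}$ for an $M$ comparable to the total number of $T$-steps in four failing jumps, and one would enumerate the relevant residue classes mod $2^M$ --- class by class, as in the proof of Proposition~\ref{prop ft 14} --- and check that each already contains a witness below $2^{500}$, with the extremal family $n=2^\ell-1$, where $\jp(2^\ell-1)=3^\ell-1$ has about $\ell\log_2 3$ bits (Section~\ref{sec 2^l-1}), serving as the worst-case anchor that calibrates the constant $2^{500}$ against the tested range $\ell\le 500\,000$. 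I expect the argument to stall exactly here: the number of such residue classes is itself exponential in $\ell$, so the certification is not presently feasible, and producing instead an honest large-deviation bound with a rate exceeding $\log 2$ per bit would require genuinely new input --- and either route, carried through, would in particular settle the Collatz conjecture.
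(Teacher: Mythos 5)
This statement is a conjecture: the paper offers no proof, only computational evidence (Proposition~\ref{prop ft 2^l-1}, an intensive random search that produced a single $71$-bit integer with $\ft(n)=5$) and a one-paragraph heuristic to the effect that a single jump already randomizes the binary expansion. Your Steps 1 and 2 are a legitimate, and considerably more quantitative, elaboration of that same heuristic: the reduction of a jump to the parity vector of $n\bmod 2^\ell$, the falling criterion $k<\ell\log_3 2$, and the rate $\rho=1-H_2(\log_3 2)\approx 0.05$ are all correct, and you correctly isolate the crux, namely that $4\rho<1$, so even a Borel--Cantelli argument inside the coin-flip model does not force $\ft(n)\le 4$ for all large $n$. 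As an account of why the conjecture is plausible and why it is hard, your sketch is sound and goes beyond what the paper says.

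Two points, however, are wrong as stated. First, the bounds you call the ``rigorous shadow'' ($\ft(n)\le 20$ and $\sft(n)\le 12$ for all but finitely many $n$) are not rigorous: the convergence of your sum is a statement inside the model in which distinct $n$ and successive jumps behave like independent coin flips, and no unconditional assertion of the form ``$\ft(n)\le B$ for all but finitely many $n$'' is known for any finite $B$ --- such a statement would already be essentially the Collatz conjecture. Terras-type results control density, not cofiniteness. Second, the certification plan of Step 3 is structurally broken: membership in $\{n : \ft(n)\ge 5\}$ is \emph{not} decided by $n\bmod 2^M$ for any fixed $M$, because the number of $T$-steps in the first jump equals the bit length $\ell$ of $n$, and the lengths of the subsequent jumps depend on the actual magnitudes of $\jp(n),\jp^{(2)}(n),\dots$, not on residues. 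Unlike the condition $\sigma(n)\ge k$, which is a union of classes mod $2^k$, the falling-time condition must be analyzed separately for every $\ell\ge 500$, so there is no single finite enumeration to perform even in principle; this is an obstacle in addition to the exponential count of classes that you do acknowledge. The paper's threshold $2^{500}$ has no such structural origin --- it is an empirical safety margin above $\ell=132$, the presumed last $\ell$ with $\ft(2^\ell-1)=5$.
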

This threshold of $2^{500}$ is inspired by Conjecture~\ref{conj ft 2^l-1}, of course with a margin for safety. It cannot be significantly lowered, since $\ft(2^{132}-1)=5$ as noted before Proposition~\ref{prop ft 2^l-1}. Moreover, intensive random search revealed one integer $n \in [2^{70}, 2^{71}-1]$ satisfying $\ft(n)=5$, namely
$$
n = 1\hs884\hs032\hs044\hs420\hs885\hs877\hs201\hs579\hs449\hs071\hs924\hs925\hs072\hs300\hs117\hs065\hs411.
$$
However, this integer is congruent to $3$ mod $16$ and hence has stopping time equal to $4$ only.

\medskip
Here is the analogous conjecture for the Syracuse falling time. Its threshold of $2^{5000}$ is similarly inspired by Proposition~\ref{prop sft 2^l-1} and Conjecture~\ref{conj sft 2^l-1}.

\begin{conjecture}\label{conj sft}
We have $\sft(n) \le 2$ for all odd $n \ge 2^{5000}$.
\end{conjecture}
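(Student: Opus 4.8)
The plan is to give a heuristic argument, since a rigorous proof would be tantamount to solving the Collatz problem. The model is the one made precise by Sinai's structure theorem, recalled in Section~\ref{sec syr variant}: for a ``random'' odd $x$, the exponents $k_i\ge 1$ defined by $\syr^{(i)}(x)=(3\syr^{(i-1)}(x)+1)/2^{k_i}$ are, over any bounded horizon, equidistributed, and one models the $k_i$ as independent with $\Pr[k_i=j]=2^{-j}$, so that $\mathbb{E}[k_i]=2$ and $\mathrm{Var}(k_i)=2$. Under this model a single Syracuse jump from an odd $n$, with $\ell=\floor{\log_2 n+1}$, satisfies
$$
\log_2\sjp(n)=\log_2 n+\ell\log_2 3-(k_1+\cdots+k_\ell)+O(1).
$$
Since $\ell\approx\log_2 n$ while $\mathbb{E}[k_1+\cdots+k_\ell]=2\ell$, the typical output is $\sjp(n)\approx n^{\log_2 3-1}=n^{0.585\ldots}$: thus $\sft(n)=1$ for the overwhelming majority of $n$, and a Syracuse jump carries a strong multiplicative \emph{downward} drift on $\log_2$.

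First I would quantify the failure event $\sft(n)\ge 2$, i.e.\ $\sjp(n)\ge n$: by the display it forces $k_1+\cdots+k_\ell\le\ell\log_2 3+O(1)$, a large deviation of the sum below its mean $2\ell$, so Cram\'er's theorem gives $\Pr[\sft(n)\ge 2]\approx e^{-I\ell}\approx n^{-\delta}$ with $\delta=I/\ln 2\approx 0.079$, where $I=I(\log_2 3)$ and $I(a)=\sup_\theta\bigl((a-1)\theta+\ln(2-e^\theta)\bigr)$ is the rate function of the geometric law. Next, $\sft(n)\ge 3$ demands in addition that the jump from $m=\sjp(n)\ge n$ again fail to fall below $n$; conditionally on the first failure the large deviation is tight, so typically $m\approx n$, and $\sjp(m)\ge n\approx m$ is a second large deviation of the same shape, essentially independent of the first block of exponents. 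Hence heuristically $\Pr[\sft(n)\ge 3]\approx n^{-2\delta}$, which for $n\ge 2^{5000}$ is at most of order $2^{-10000\delta}\approx 2^{-790}$. This vanishingly small per-integer probability, together with the fact (Proposition~\ref{prop sft 2^l-1}, Conjecture~\ref{conj sft 2^l-1}) that $\sft(2^\ell-1)$ has settled at $2$ far below $\ell=5000$, is what motivates the threshold.

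The hard part is that this argument resists being made rigorous, for two compounding reasons. First, Sinai's theorem pins down the joint law of $(k_1,\dots,k_m)$ only for \emph{fixed} $m$ (it is one full residue class mod $6\cdot 2^{k_1+\cdots+k_m}$), whereas here $m=\ell$ grows like $\log_2 n$, and controlling the composition of $\Theta(\log_2 n)$ Syracuse steps — in particular the near-independence of consecutive blocks of exponents used above — is precisely the wall against which the Collatz conjecture stands. Second, even granting independence one has $2\delta<1$, so $\sum_n\Pr[\sft(n)\ge 3]$ diverges and no Borel--Cantelli or second-moment argument closes the gap: the crude i.i.d.\ model, by itself, predicts only that $\sft(n)\le 2$ for almost every odd $n$, not for \emph{every} $n$ beyond a threshold; one must expect the arithmetic of $\syr$ to suppress the exceptional integers more strongly than the model sees, as the tameness of $\sft(2^\ell-1)$ already hints. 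Accordingly, the realistic near-term target — via the circle of ideas around Sinai's theorem as in~\cite{KM,KS,S} — is the weaker assertion that $\sft(n)\le 2$ for a density-one set of odd $n$, equivalently for all odd $n\le x$ outside an exceptional set of size $o(x)$; upgrading ``almost all $n$'' to ``all $n$ beyond an explicit bound'' is the genuine obstacle, and is in effect the Collatz problem itself.
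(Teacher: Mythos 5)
The statement you are examining is a \emph{conjecture}: the paper offers no proof of it, only the computations of Proposition~\ref{prop sft 2^l-1} and Conjecture~\ref{conj sft 2^l-1} on $2^\ell-1$ for $\ell\le 500\hs000$, semi-random search, and the one-paragraph heuristic of Section~5.1 (a single jump should already ``randomize'' the binary expansion). So there is no paper proof to measure you against, and your proposal is, correctly and explicitly, also only a heuristic. What you add is a genuine quantitative sharpening of the paper's informal argument: modelling the division exponents $k_i$ as i.i.d.\ with $\Pr[k_i=j]=2^{-j}$ (the distribution suggested by Sinai's structure theorem), you get the drift $\log_2\sjp(n)\approx(\log_2 3-1)\log_2 n$, and Cram\'er's theorem with the rate function you wrote gives $I(\log_2 3)\approx 0.055$, hence $\Pr[\sft(n)\ge 2]\approx n^{-\delta}$ with $\delta=I/\ln 2\approx 0.079$; I checked these constants and they are right, as is the conditioning argument that a first failure typically lands just barely above $n$, so that $\Pr[\sft(n)\ge 3]\approx n^{-2\delta}$.

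The most valuable thing in your write-up --- and a point the paper does not confront --- is your observation that $2\delta\approx 0.16<1$. Taken at face value, the i.i.d.\ model then predicts about $x^{1-2\delta}\to\infty$ odd integers $n\le x$ with $\sft(n)\ge 3$, i.e.\ the model, far from supporting Conjecture~\ref{conj sft}, actually predicts it is \emph{false} as an ``all $n$ beyond a threshold'' statement and only yields ``$\sft(n)\le 2$ for almost all $n$''. So if the conjecture is true, it must be for arithmetic reasons invisible to the probabilistic model (as the uniform tameness of $\sft(2^\ell-1)$ hints), and if one trusts the model one should rather expect rare large-$n$ counterexamples. Either way, your heuristic does not establish the statement, and you are right that upgrading the density-one prediction to a universal bound is essentially the Collatz problem itself; your candour about both obstacles (the fixed-$m$ limitation of Sinai's theorem versus $m=\ell\sim\log_2 n$, and the divergence of $\sum_n n^{-2\delta}$) is exactly the correct assessment of where the difficulty lies.
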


\subsection{Heuristics}
Besides the computational evidence leading to Conjectures~\ref{conjecture ft}, \ref{conjecture sft}, \ref{conj ft 2^l-1}, \ref{conj sft 2^l-1},~\ref{conj ft}  and~\ref{conj sft}, a heuristic argument would run as follows. It is well known that the Collatz conjecture is equivalent to the statement that, starting with any integer $n \ge 1$, the probability for $T^{(k)}(n)$ to be even or odd tends to $1/2$ as $k$ grows to infinity. Thus, even if $n$ written in base 2 is a highly structured binary string, as e.g. for $n=2^\ell-1$, one may expect that for $\ell=$ the length of that string, then $T^{(\ell)}(n)$ in base 2 will already look more random. That is, a single jump or Syracuse jump at $n \ge 3$ should already introduce a good dosis of randomness, all the more so as $n$ grows very large. And therefore, a bounded number of jumps or Syracuse jumps at $n$ might well suffice to fall below $n$.

\subsection{A challenge}
We hope that the experts in highly efficient computation of the $3x+1$ function will tackle the challenge of probing these conjectures to much higher levels than the ones reported here. For instance, as both a challenge and a request to the reader, and in view of Conjecture~\ref{conj ft}, if you do find any $n \ge 2^{500}$ satisfying $\ft(n) \ge 5$, please e-mail it to the authors. Your solution will be duly recorded on a dedicated webpage.

\section*{Acknowledgements}

The computations performed for this paper were carried out on the CALCULCO high performance computing platform provided by SCoSI/ULCO (Service COmmun du Système d’Information de l’Université du Littoral Côte d’Opale).

\medskip
\noindent
\textbf{Authors' addresses:}

\smallskip

\noindent
Shalom Eliahou\textsuperscript{a,b}, Jean Fromentin\textsuperscript{a,b} and R\'enald Simonetto\textsuperscript{a,b,c}

\noindent
\textsuperscript{a}Univ. Littoral C\^ote d'Opale, UR 2597 - LMPA - Laboratoire de Math\'ematiques Pures et Appliqu\'ees Joseph Liouville, F-62100 Calais, France\\
\textsuperscript{b}CNRS, FR2037, France\\
\textsuperscript{c}Microsoft France, 37 Quai du Pr\'esident Roosevelt, 92130 Issy-les-Moulineaux, France

\end{document}